\documentclass[reqno,12pt]{amsart}
\usepackage{amsmath,amsthm,amsfonts,amssymb}

\setlength{\textwidth}{16truecm}
\setlength{\textheight}{24truecm}
\setlength{\oddsidemargin}{0cm}
\setlength{\evensidemargin}{0cm}
\setlength{\topmargin}{-45pt}

\newtheorem{theorem}{Theorem}[section]

\newtheorem{lemma}[theorem]{Lemma}

\newtheorem{proposition}[theorem]{Proposition}

\newtheorem{remark}[theorem]{Remark}
\numberwithin{equation}{section}

 \begin{document}

 \centerline{{\bf Log-Sobolev-type inequalities for solutions to stationary}}

  \centerline{{\bf Fokker--Planck--Kolmogorov equations}}

  \vskip .1in

  \centerline{V.I. Bogachev$^{a,b,c}$,  A.V. Shaposhnikov$^{a}$,  S.V. Shaposhnikov$^{a,b}$}

\vskip .1in

\noindent
$^{a}$ Department of Mechanics and Mathematics, Moscow State University, 119991 Moscow, Russia

\noindent
$^{b}$ National Research University Higher School of Economics, ul. Usacheva 6, 119048 Moscow, Russia

\noindent
$^{c}$ The corresponding author, vibogach@mail.ru

 \vskip .1in

 {\bf Abstract.} We prove that every probability measure $\mu$ satisfying
 the stationary Fokker--Planck--Kolmogorov equation obtained by a $\mu$-integrable
 perturbation $v$ of the drift term $-x$ of the Ornstein--Uhlenbeck operator
 is absolutely continuous with respect to the corresponding Gaussian measure $\gamma$
 and for the density $f=d\mu/d\gamma$
 the integral of $f |\log (f+1)|^\alpha$ against $\gamma$ is estimated
 via $\|v\|_{L^1(\mu)}$ for all $\alpha<1/4$, which is a weakened $L^1$-analog of the logarithmic
 Sobolev inequality. This yields that stationary measures of infinite-dimensional
 diffusions whose
  drifts are integrable perturbations of $-x$ are absolutely continuous with respect
  to Gaussian measures. A generalization is obtained for
  equations on Riemannian manifolds.

\vskip .1in

  \noindent
MSC: primary  35J15; Secondary 35B65

\vskip .1in

\noindent
Keywords:
Ornstein--Uhlenbeck operator,
stationary Fokker--Planck--Kolmogorov equation, integrable drift,
logarithmic Sobolev inequality, Gaussian measure

\section{Introduction}

 It is known (see \cite{BKR01} and \cite{BKRS})
  that a probability measure $\mu$ on $\mathbb{R}^d$ satisfying the
 stationary Fokker--Planck--Kolmogorov equation
 \begin{equation}\label{eq1}
 \Delta\mu -{\rm div}\, (b\mu)=0
 \end{equation}
 in the sense of the integral identity
 $$
 \int_{\mathbb{R}^d} [\Delta \varphi +\langle b,\nabla \varphi\rangle ]\, d\mu=0,
 \quad \varphi\in C_0^\infty(\mathbb{R}^d),
 $$
 where $b\colon\mathbb{R}^d\to \mathbb{R}^d$ is Borel measurable and integrable with respect to $\mu$ on balls,
 possesses a density $\varrho$ with respect to Lebesgue measure. Moreover, if $|b|$ is locally integrable
 to some power $p$ greater than $d$ with respect to $\mu$ or with respect to Lebesgue measure, then
 $\varrho$ belongs to the Sobolev class $W^{p,1}$ on every ball
 (the class $W^{p,1}(\Omega)$ on a domain~$\Omega$ consists of functions belonging
  to $L^p(\Omega)$ along with their generalized first order derivatives).
  However, this is false if $p<d$ (see \cite{BKRS}).
 On the other hand, as shown in~\cite{BR95} (see also~\cite{BKRS}), in the case
 of the global condition
 $|b|\in  L^2(\mu)$, we have $\varrho\in W^{1,1}(\mathbb{R}^d)$,
 $\sqrt{\varrho}\in W^{2,1}(\mathbb{R}^d)$ and
 $$
 \int_{\mathbb{R}^d} \frac{|\nabla \varrho|^2}{\varrho}\, dx\le
 \int_{\mathbb{R}^d} |b|^2\, d\mu.
 $$
 The latter bound admits an infinite-dimensional version.
To this end we write the drift $b$ in the form
 $$
 b(x)=-x+v(x).
 $$
 If $v=0$, then the only solution in the class of probability measures is the standard Gaussian measure $\gamma$
 with density $(2\pi)^{-d/2}\exp (-|x|^2/2)$. Hence it is natural to express $\mu$ through $\gamma$. For the corresponding
 density $f=d\mu/d\gamma$ one has
 $$
 \int_{\mathbb{R}^d} \frac{|\nabla f|^2}{f}\, d\gamma\le
 \int_{\mathbb{R}^d} |v|^2\, d\mu.
 $$
 In this form the result extends to the infinite-dimensional case provided that $v$ takes values
 in the Cameron--Martin space $H$ of the Gaussian measure $\gamma$ and $|v|=|v|_H$ and
 $|\nabla f|=|\nabla f|_H$ are
 taken with respect to the Cameron--Martin norm. The logarithmic Sobolev inequality
 (applied to~$\sqrt{f}$) yields the bound
 $$
 \int_{\mathbb{R}^d} f|\log f|\, d\gamma\le
 \int_{\mathbb{R}^d} |v|^2\, d\mu,
 $$
 as well as its infinite-dimensional analog,
 which is a constructive sufficient condition for the uniform integrability
 of the densities of finite-dimensional projections of solutions to infinite-dimensional equations with
 respect to the corresponding Gaussian measures.

 It has recently been shown in \cite{BPS} that these results on Sobolev differentiability
 of densities break down in the $L^1$-setting.
 It can happen that $|b|\in L^1(\mu)$, but the solution density $\varrho$ does not belong
 to the Sobolev class $W^{1,1}(\mathbb{R}^d)$, i.e., $|\nabla\varrho|$
 does  not belong to~$L^1(\mathbb{R}^d)$, and similarly for the density $f$
 the condition $|v|\in L^1(\gamma)$ does not guarantee that
 the function $|\nabla f|$  belongs to $L^1(\gamma)$. However, these negative results
 left open the important question of whether in the infinite-dimensional case the solution $\mu$ with
 $|v|_H\in L^1(\mu)$ is always absolutely continuous with respect
 to $\gamma$ as it holds in the finite-dimensional case.
 The main result of this paper answers positively this
 long-standing question. This result is based
 on a dimension-free finite-dimensional bound on the integral of $f|\log (f+1)|^\alpha$.
It is worth noting that if $f>0$ is in the Sobolev class $W^{1,1}(\gamma)$ with respect to $\gamma$,
then the measure $f\cdot\gamma$ satisfies (\ref{eq1}) with $v=\nabla f/f$. It is known
(see \cite{Led}, \cite{Am}, and \cite{FH}) that in this very special case
$f\sqrt{\log (f+1)}\in L^1(\gamma)$.

\section{Main results}

Throughout this section we use the notation $\|v\|_{L^1(\mu)}:=\bigl\| |v| \bigr\|_{L^1(\mu)}$.

The following theorem is our main result.

\begin{theorem}\label{mt1}
For every $\alpha<1/4$, there is a number $C(\alpha)$ such that
whenever $\mu=f\cdot \gamma$ is a probability measure on $\mathbb{R}^d$ satisfying
{\rm(\ref{eq1})} with $b(x)=-x+v(x)$ , where $|v|\in L^1(\mu)$, one has
\begin{equation}\label{b1}
\int_{\mathbb{R}^d} f \bigl(\log(f+1)\bigr)^{\alpha}\, d\gamma \le
C(\alpha)\Bigl[
1 + \| v\|_{L^1(\mu)}
\Bigl(\log (1 + \| v\|_{L^1(\mu)})\Bigr)^{\alpha}\Bigr].
\end{equation}
\end{theorem}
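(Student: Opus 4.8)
The natural starting point is the integral identity for $\mu = f\cdot\gamma$, rewritten in terms of the Ornstein–Uhlenbeck operator $L\varphi = \Delta\varphi - \langle x,\nabla\varphi\rangle$, which is symmetric with respect to $\gamma$. Plugging $\varphi = g(f)$ for a suitable smooth, bounded, monotone function $g$ into the identity gives, after integration by parts against $\gamma$,
$$
\int_{\mathbb{R}^d} g'(f)\,|\nabla f|^2\, d\gamma = -\int_{\mathbb{R}^d} \langle v,\nabla(g(f))\rangle\, d\mu = -\int_{\mathbb{R}^d} g'(f)\,\langle v,\nabla f\rangle\, f\, d\gamma.
$$
(One must first justify that $f$ is weakly differentiable with $|\nabla f|^2/f$ locally integrable — this is exactly the $W^{1,1}$-regularity with $\sqrt{f}\in W^{2,1}$ recalled in the introduction from \cite{BR95}, but only under $|v|\in L^2(\mu)$; in the $L^1$-setting one works with a truncation/approximation and keeps all constants dimension-free.) The idea is then to choose $g$ so that $g'(f)f$ behaves like a power of $\log(f+1)$: roughly $g(t)\sim \int_0^t (\log(s+1))^{\alpha}/(s+1)\,ds$, so that the left-hand side controls $\int (\log(f+1))^{\alpha}|\nabla f|^2/(f+1)\,d\gamma$ — a weighted Fisher-information-type quantity — while the right-hand side is $\int (\log(f+1))^{\alpha}\langle v,\nabla f\rangle f/(f+1)\,d\gamma$.

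The core of the argument is to convert this Fisher-type bound into the claimed integral bound on $f(\log(f+1))^{\alpha}$, and this is where the restriction $\alpha<1/4$ should enter. The mechanism is the logarithmic Sobolev inequality for $\gamma$ in its tight form (equivalently, the Gaussian Poincaré/spectral-gap and hypercontractivity), applied not to $\sqrt{f}$ directly but to a function like $h = \Phi(f)$ with $\Phi(t)\sim \sqrt{t}(\log(t+1))^{\alpha/2}$ or similar, chosen so that $|\nabla h|^2$ is dominated by the weighted Fisher information just estimated and $h^2\log(h^2)$ reproduces $f(\log(f+1))^{\alpha}$ up to lower-order terms. The exponent $\alpha<1/4$ presumably comes from the fact that one pays a factor $\tfrac12$ twice: once in passing $f\mapsto\sqrt{f}$ (so a power $\alpha$ on $\log f$ becomes effectively $\alpha$ on $\log\sqrt{f}$ but the entropy side only sees $2\alpha$... ) — more precisely, one iterates an inequality of the form ``entropy of order $\beta$ is controlled by entropy of order $2\beta$ plus Fisher information'', and the self-improvement closes only when the exponents strictly contract, forcing $\alpha<1/4$. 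I expect the bookkeeping here — tracking how the $\alpha$ on the left and the $\alpha$ on the right of \eqref{b1} match up while all constants stay dimension-independent — to be the main obstacle.

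For the right-hand side, the plan is to split according to the size of $|v|$ (or of $f$) and use a Young-type inequality $ab \le \varepsilon a^{q} + C_\varepsilon b^{q'}$ to absorb the gradient part of $\int (\log(f+1))^{\alpha}\langle v,\nabla f\rangle\,f/(f+1)\,d\gamma$ into the left-hand Fisher term, leaving a remainder of the form $C\int |v|^2 (\log(f+1))^{\alpha}\cdot(\text{something})\,d\mu$ — but since we only have $|v|\in L^1(\mu)$, not $L^2$, one cannot afford $|v|^2$ and must instead keep $|v|$ linear. This suggests using the splitting $|v|\,|\nabla f| \le \delta |\nabla f|^2/(f+1) + \delta^{-1}|v|^2(f+1)$ only on the region where $f$ is bounded, and on the region $\{f > M\}$ bounding $\langle v,\nabla f\rangle f/(f+1)$ by $|v|\,|\nabla f|$ and integrating by parts once more to trade $|\nabla f|$ for $f\,|{\rm div}\,v|$-type terms — or, more robustly, using the elementary bound $|\nabla f| \le \tfrac12(|\nabla f|^2/(f+1) + (f+1))$ and then choosing $M$ as a function of $\|v\|_{L^1(\mu)}$ to optimize. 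Optimizing $M$ against $\|v\|_{L^1(\mu)}$ is what produces the factor $\|v\|_{L^1(\mu)}(\log(1+\|v\|_{L^1(\mu)}))^{\alpha}$ on the right of \eqref{b1}: the logarithmic correction is the signature of choosing $M \sim \|v\|_{L^1(\mu)}$ and picking up $(\log M)^{\alpha}$ from the weight. Throughout, all constants must be verified to depend on $\alpha$ alone and not on $d$, so that the infinite-dimensional consequence stated in the abstract follows by projecting onto finite-dimensional subspaces and passing to the limit.
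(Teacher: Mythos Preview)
Your approach has a genuine gap at its very first step, and the paper's proof takes a completely different route precisely to avoid it.

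You propose to plug $\varphi=g(f)$ into the weak equation and integrate by parts to obtain a Fisher-information-type quantity $\int g'(f)|\nabla f|^2\,d\gamma$. You acknowledge that this requires $f$ to be weakly differentiable and say one should ``work with a truncation/approximation.'' But the paper explicitly recalls (citing \cite{BPS}) that under only $|v|\in L^1(\mu)$ the density $f$ can \emph{fail} to belong to $W^{1,1}(\gamma)$; there is no a~priori reason for $\nabla f$ to exist even as an $L^1$ object, and no dimension-free approximation scheme that would let you carry out the integration by parts and pass to the limit. Your subsequent plan to absorb $\int g'(f)\langle v,\nabla f\rangle f\,d\gamma$ by Young's inequality runs into the same wall you identify yourself: Cauchy--Schwarz produces $|v|^2$, which is unavailable, while the cruder bound $|\nabla f|\le \tfrac12(|\nabla f|^2/(f{+}1)+(f{+}1))$ leaves a term $\int |v|(\log(f{+}1))^\alpha(f{+}1)\,d\gamma$ that is essentially the quantity you are trying to control. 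The splitting by $\{f>M\}$ does not close this loop.

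The paper circumvents differentiating $f$ entirely. Its proof is semigroup-based: from the equation one derives the representation $T_tf-f=\int_0^t T_s\,\mathrm{div}_\gamma(fv)\,ds$, where the right-hand side makes sense in $L^1(\gamma)$ for every $t>0$ because $T_s\,\mathrm{div}_\gamma$ extends to a bounded operator $L^1(\gamma,\mathbb{R}^d)\to L^1(\gamma)$ with norm $\lesssim s^{-1/2}$ (Lemma~\ref{lem2}). This gives $\|T_tf-f\|_{L^1(\gamma)}\le (2t)^{1/2}\|v\|_{L^1(\mu)}$ and $\|f\cdot\gamma-\gamma\|_K\le\|v\|_{L^1(\mu)}$ (Proposition~\ref{p1}). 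Separately, Wang's log-Harnack inequality (not the log-Sobolev inequality) combined with the Kantorovich bound yields $\int T_tf(\log(T_tf{+}1))^{1/2}\,d\gamma\le C+Ct^{-1/2}\|v\|_{L^1(\mu)}$ (Lemma~\ref{lem1}). One then telescopes $f\le T_1f+\sum_n|T_{t_{n+1}}f-T_{t_n}f|$ along $t_n=n^{-\beta}$ and interpolates each increment by H\"older between its $L^1$ norm (decaying like $n^{-(1+\beta)/2}$) and its $L\log^{1/2}\!L$ norm (growing like $n^{\beta/2}$). The resulting series in the Orlicz norm converges exactly when one can choose $\beta>(2\alpha+1)/(1-4\alpha)$, which forces $\alpha<1/4$. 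No derivative of $f$ is ever taken.
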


It is possible that our bound
with $\alpha<1/4$ can be raised up to~$1/2$.

The natural infinite-dimensional  version of this result is considered in the next section.

The proof is based on two auxiliary results of independent interest.
Let $\{T_{t}\}_{t\ge0}$ denote the standard Ornstein--Uhlenbeck semigroup on $L^1(\gamma)$
defined by
$$
T_t\varphi(x)=\int_{\mathbb{R}^d} \varphi\Bigl(e^{-t}x-\sqrt{1-e^{-2t}}\, y\Bigr)\, \gamma(dy).
$$
Some elementary properties of this semigroup used below
can be found in \cite{B98}, \cite{B10}, and~\cite{B-umn18}.

Let   $\|\cdot\|_{K}$ denote the usual $1$-Kantorovich norm defined on bounded
signed measures $\sigma$ with $\sigma(\mathbb{R}^d)=0$ and finite first moment
by
$$
\|\sigma\|_K=\sup\biggl\{\int g\, d\sigma\colon g\in {\rm Lip}_1\biggr\},
$$
where ${\rm Lip}_1$ is the set of all $1$-Lipschitz functions on~$\mathbb{R}^d$.
It is readily seen that the supremum can be taken over the class of
$1$-Lipschitz smooth compactly supported functions.
This norm can be extended to the space of signed measures with finite first moment.
For example, we can set $\|\delta_0\|_K=1$ for Dirac's measure $\delta_0$ at zero
and then let $\|\sigma\|_K:=\|\sigma -\sigma(\mathbb{R}^d)\delta_0\|_K+|\sigma(\mathbb{R}^d)|$.
It is also possible to extend this norm by imposing the restriction~$g(0)=0$ when taking~$\sup$.

It is known that for every Borel probability
measure $\eta$ with finite first moment
on $\mathbb{R}^d$ there is a probability measure $\sigma$ on $\mathbb{R}^d\times\mathbb{R}^d$ with
projections $\eta$ and $\gamma$ such that it minimizes the
integral of $|x-y|$ over such measures and the corresponding
minimum is~$\|\eta -\gamma\|_K$. Such a measure $\sigma$ is called a $1$-optimal
transportation plan for the measures $\eta$ and~$\gamma$.
Hence the same is true for the pair of measures $c\eta$ and $c\gamma$ with any $c>0$:
their  $1$-optimal transportation plan is $c\sigma$ (its total mass is~$c$).
On this topic, see \cite{AG}, \cite{B18}, \cite{BK}, and~\cite{V}.

The next two lemmas are connected with properties of the
Ornstein--Uhlenbeck semigroup, but not with our equation.

\begin{lemma}\label{lem1}
Suppose that  $g\in L^1(\gamma)$ is a nonnegative function.
If the measure $g\cdot\gamma$ has a finite first moment, i.e.,
$$
K:=\bigl\|g\cdot\gamma - \|g\|_{L^1(\gamma)}\gamma\bigr\|_K
$$
 is finite, then
\begin{multline}\label{elog}
J_t(g)=\int_{\mathbb{R}^d} (T_{t}g) \bigl(\log (T_{t}g+1)\bigr)^{1/2} \,d\gamma
\\
\le
\|g\|_{L^1(\gamma)}\Bigl(\log (\|g\|_{L^1(\gamma)}+1)\Bigr)^{1/2} + 2^{-1} K t^{-1/2}
\quad \forall t \in [0, 1].
\end{multline}
In particular, if $g$ is a probability density
with respect to~$\gamma$, we have
$$%%\begin{equation}\label{elog1}
J_t(g)=\int_{\mathbb{R}^d} (T_{t}g) \bigl(\log (T_{t}g+1)\bigr)^{1/2} \,d\gamma
\le
\sqrt{\log{2}}+ 2^{-1} \|g\cdot\gamma -\gamma\|_K t^{-1/2}
\quad \forall t \in [0, 1].
$$%%\end{equation}
\end{lemma}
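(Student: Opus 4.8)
The plan is to exploit the smoothing action of the Ornstein--Uhlenbeck semigroup together with the representation of the Kantorovich norm through $1$-Lipschitz functions. First I would record the elementary convexity/monotonicity properties of the function $\Phi(s) = s(\log(s+1))^{1/2}$ on $[0,\infty)$: it is nonnegative, increasing, and (at least after some threshold, or up to a controllable correction) convex, so that Jensen's inequality applies to $T_t$. Applying $\Phi$ inside the integral and using that $T_t$ is a contraction on $L^1(\gamma)$ with $\int T_t g\,d\gamma = \|g\|_{L^1(\gamma)}$, one gets the "zeroth-order" term $\|g\|_{L^1(\gamma)}(\log(\|g\|_{L^1(\gamma)}+1))^{1/2}$ essentially for free; the whole point is to show that the \emph{excess} $J_t(g) - \|g\|_{L^1(\gamma)}(\log(\|g\|_{L^1(\gamma)}+1))^{1/2}$ is controlled by $\tfrac12 K t^{-1/2}$.

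The key step is a gradient estimate for $T_t$. It is classical that for the Ornstein--Uhlenbeck semigroup $|\nabla T_t \varphi| \le e^{-t} T_t|\nabla\varphi|$, and more importantly there is a commutation/smoothing bound of the form $\||\nabla T_t g\|_{L^1(\gamma)} \lesssim (1-e^{-2t})^{-1/2}\|g\|_{L^1(\gamma)}$; for small $t$ this behaves like $t^{-1/2}$. What I actually need is the dual statement: for a $1$-Lipschitz smooth compactly supported $\psi$, $|\nabla T_t \psi| \le$ something bounded uniformly (indeed $\le e^{-t}\le 1$), so that $T_t$ maps $\mathrm{Lip}_1$ into itself. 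Then, writing the difference of the two measures $g\cdot\gamma$ and $\|g\|_{L^1(\gamma)}\gamma$ and using that $T_t$ is self-adjoint on $L^2(\gamma)$ (hence $\int (T_t g)\psi\,d\gamma = \int g\,(T_t\psi)\,d\gamma$), I can estimate integrals of $T_t g$ against test functions by $K$ times the Lipschitz constant of $T_t\psi$. The quantitative $t^{-1/2}$ must come from differentiating: I would write
$$
J_t(g) - \Phi\bigl(\|g\|_{L^1(\gamma)}\bigr)
= \int_0^t \frac{d}{ds} \Bigl[\int_{\mathbb{R}^d} \Phi(T_s g)\,d\gamma\Bigr]\,ds,
$$
use $\partial_s T_s g = L T_s g$ (with $L$ the OU generator) and integrate by parts to bring out $\int \Phi''(T_s g)\,|\nabla T_s g|^2\,d\gamma$ or, after an integration by parts moving one gradient onto $\Phi'$, a term of the form $\int \langle \nabla T_s g, \nabla(\Phi'(T_s g))\rangle\,d\gamma$; the crucial observation is that $\Phi'$ is bounded (it grows only like $(\log)^{1/2}$, whose derivative is integrable against the measure), so this reduces to estimating $\||\nabla T_s g\|_{L^1(\gamma)}$, which by the smoothing bound is $\lesssim K s^{-1/2}$ once one replaces $\|g\|_{L^1(\gamma)}$ by $K$ (this is where the \emph{centered} measure $g\cdot\gamma - \|g\|_{L^1(\gamma)}\gamma$ enters: its gradient flow under $T_s$ decays, and its Kantorovich norm is exactly $K$). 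Integrating $s^{-1/2}$ over $[0,t]$ gives $2t^{1/2}$, and tracking the constant yields the coefficient $\tfrac12$ in front of $K t^{-1/2}$ — wait, more carefully, one wants the bound in terms of $t^{-1/2}$ not $t^{1/2}$, so the estimate should instead be applied at the single scale $s = t$: bound $|\nabla T_t g|$ in $L^1(\gamma)$ by $\tfrac12 K t^{-1/2}$ directly (using $|\nabla T_t(g\cdot\gamma)|$ against $1$-Lipschitz duals and the explicit Mehler kernel derivative), then use $\Phi(T_t g) \le \Phi(\|g\|_{L^1}) + \Phi'(\xi)\,|T_t g - \|g\|_{L^1}|$ pointwise-in-expectation with $\Phi' \le 1$ (or a universal constant), and $\int |T_t g - \|g\|_{L^1(\gamma)}|\,d\gamma \le$ a Kantorovich-type quantity $\le \tfrac12 K t^{-1/2}$ coming from the regularizing effect.

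The main obstacle I anticipate is making the last pointwise bound on $\Phi$ honest: $\Phi(s)=s(\log(s+1))^{1/2}$ is not globally Lipschitz (its derivative $\Phi'(s) = (\log(s+1))^{1/2} + \tfrac{s}{2(s+1)(\log(s+1))^{1/2}}$ is unbounded near $s=0$ because of the second term, and grows like $(\log s)^{1/2}$ at infinity), so I cannot simply write $|\Phi(a)-\Phi(b)| \le C|a-b|$. The fix is to split: near $s=0$ one uses that $\Phi$ is actually Hölder-$\tfrac12$-ish and the contribution is small, or one uses $\log(s+1)$ rather than $\log s$ precisely so that $\Phi(s)\le s$ for $s$ near $0$ up to lower order, controlling that piece by the $L^1$-contraction directly; for large $s$ one uses the slow growth of $(\log(f+1))^{1/2}$ together with the fact that the "mass at height $>\lambda$" of $T_t g$ is controlled by $K t^{-1/2}/\lambda$ via the Lipschitz duality, and a dyadic decomposition in $\lambda$ gives an extra $(\log)^{1/2}$ factor that is exactly absorbed — this is the reason the correct power is $1/2$ inside the log for this lemma (and why the final theorem loses down to $\alpha<1/4$). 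So the technical heart is: prove $\| (T_t g - \|g\|_{L^1(\gamma)}) \cdot\gamma\|$ in a weak sense decays like $K t^{-1/2}$, then feed that into a layer-cake estimate for $\int \Phi(T_t g)\,d\gamma$, being careful that the small-value and large-value regimes are each handled by the appropriate tool (contraction, resp. Kantorovich smoothing). The special case of a probability density is then immediate by setting $\|g\|_{L^1(\gamma)}=1$, so $\Phi(1)=\sqrt{\log 2}$.
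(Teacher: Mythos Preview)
Your proposal has a genuine gap: you never identify the tool that does the real work, and the substitutes you suggest do not close the argument.

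The paper's proof is short and rests on one ingredient you do not mention: Wang's log-Harnack inequality
\[
T_t \log h(x) \le \log T_t h(y) + \frac{1}{2}\,\frac{|x-y|^2}{e^{2t}-1},
\]
applied with $h = T_t g + 1$. Taking square roots (using $\sqrt{a+b}\le\sqrt a+\sqrt b$) gives
\[
\bigl(T_t\log h(x)\bigr)^{1/2} \le \bigl(\log T_t h(y)\bigr)^{1/2} + (4t)^{-1/2}|x-y|.
\]
One then integrates this two-variable inequality against a $1$-optimal transport plan $\sigma$ between $g\cdot\gamma$ and $\|g\|_1\gamma$. The $y$-marginal is $\|g\|_1\gamma$, and Jensen's inequality collapses $\int(\log T_t h)^{1/2}\,d\gamma$ to $(\log(\|g\|_1+1))^{1/2}$; the cost term produces exactly $\tfrac12 K t^{-1/2}$. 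Finally $T_t[(\log h)^{1/2}]\le (T_t\log h)^{1/2}$ and the symmetry of $T_t$ turn the $x$-integral into $J_t(g)$. The point is that the log-Harnack inequality is a \emph{pointwise two-point} comparison, which is precisely what couples to the Kantorovich norm via optimal transport.

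Your route tries instead to first prove $\|T_t g - \|g\|_1\|_{L^1(\gamma)} \le C K t^{-1/2}$ (which is correct, by duality and the gradient bound $|\nabla T_t\varphi|\le (2t)^{-1/2}\|\varphi\|_\infty$), and then upgrade this $L^1$ control to control of $J_t(g)=\int \Phi(T_t g)\,d\gamma$ with $\Phi(s)=s(\log(s+1))^{1/2}$. This second step fails. You yourself note that $\Phi'$ is unbounded (it grows like $(\log s)^{1/2}$), so no Lipschitz bound is available. A layer-cake argument does not rescue the situation: an $L^1$ bound on $T_t g$ alone places no constraint on $\int \Phi(T_t g)\,d\gamma$, as the example $f_n = n\,\mathbf 1_{A_n}$ with $\gamma(A_n)=1/n$ shows ($\|f_n\|_1=1$ but $\int \Phi(f_n)\,d\gamma=(\log(n+1))^{1/2}\to\infty$). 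You would need an additional a priori input ruling out such concentration of $T_t g$, and that is exactly what the log-Harnack inequality supplies. Your differentiation-in-$s$ idea is also miswired: at $s=0$ one has $\int\Phi(T_0 g)\,d\gamma=J_0(g)$, not $\Phi(\|g\|_1)$, so the integral $\int_0^t \partial_s[\cdots]\,ds$ does not produce the desired difference.
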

\begin{proof}
We employ Wang's log-Harnack inequality
for a nonnegative function $h\in L^1(\gamma)$
established in \cite{W97}, \cite{W1} (see also \cite{W13} and~\cite{W14})
in much greater generality:
\begin{equation}\label{ew1}
T_{t}\log h (x) \le \log T_{t}h(y) + \frac{1}{2}\frac{1}{e^{2t} - 1}|x - y|^2.
\end{equation}
Let us take $h= T_{t}g+1$ in Wang's inequality.
Then
$$
\bigl(T_{t}\log h (x) \bigr)^{1/2} \le
\bigl(\log T_{t}h(y) \bigr)^{1/2} + (4t)^{-1/2}|x - y|.
$$
Let $\sigma$ be a $1$-optimal transportation plan for $g\cdot\gamma$ and $\|g\|_{1}\gamma$,
where we write $\|g\|_{1}=\|g\|_{L^1(\gamma)}$ for simplicity in this proof.
Integrating the previous bound with respect to~$\sigma$
(we omit the indication of domain of integration below)
and observing that
\begin{align*}
\int \bigl(\log T_{t}h(y) \bigr)^{1/2}\, \sigma(dx\, dy)
&=
\|g\|_{1}\int \bigl(\log T_{t}h(y) \bigr)^{1/2}\, \gamma(dy)
\\
&\le \|g\|_1 \biggl(\log \int T_{t}h(y) \, \gamma(dy)\biggr)^{1/2}=
\|g\|_1 \bigl(\log (\|g\|_1+1)\bigr)^{1/2},
\end{align*}
where we have used that $T_th\ge 1$ and applied Jensen's inequality,
we arrive at the inequality
$$
\int g(x)\bigl(T_{t}\log h (x) \bigr)^{1/2}  \,\gamma(dx) \le
\|g\|_1 \bigl(\log (\|g\|_1+1)\bigr)^{1/2} + (4t)^{-1/2} \|g\cdot\gamma -I(g)\gamma\|_K ,
$$
which yields (\ref{elog}), since
$
T_{t}\bigl[\bigl(\log h\bigr)^{1/2}\bigr]
\le (T_{t}\log h )^{1/2}
$
and the integral of $gT_{t}\bigl[\bigl(\log h\bigr)^{1/2}\bigr]$ equals
the integral of $(T_{t}g) (\log h)^{1/2}$.
\end{proof}

It is worth noting that in the situation of this lemma the function
$T_tg$ belongs to the Gaussian Sobolev class~$W^{1,1}(\gamma)$, see
\cite[Proposition~3.5]{FH} or \cite[Proposition~5.12]{B-umn18}.

The Ornstein--Uhlenbeck operator $L$ is defined by
\begin{equation}\label{o-u}
L\varphi(x): = \Delta\varphi(x)  - \langle x, \nabla \varphi(x)\rangle
\end{equation}
for smooth functions $\varphi$.
It can be written as
$$
L\varphi = {\rm div}_{\gamma} \nabla \varphi,
$$
where for a smooth vector field $u$ we set
$$
{\rm div}_{\gamma} u(x)=
{\rm div}\, u(x)-\langle x,u(x)\rangle .
$$
For smooth compactly supported functions $\varphi$ and $\psi$
and a smooth vector field $u$ we have
$$
\int_{\mathbb{R}^d} \varphi L\psi\,d\gamma =
-\int_{\mathbb{R}^d} \langle \nabla \varphi,\nabla \psi\rangle \,d\gamma,
$$
$$
\int_{\mathbb{R}^d} \varphi \ {\rm div}_{\gamma} u \,d\gamma
=-\int_{\mathbb{R}^d} \langle \nabla \varphi,u\rangle \,d\gamma.
$$
These equalities extend to functions and vector fields from Gaussian Sobolev classes,
which is not used  below.

We shall now see that although for a general $\gamma$-integrable vector field~$u$
its $\gamma$-divergence can be a singular distribution, for every $s>0$, there is a  function
$T_s{\rm div}_{\gamma} u$ in~$L^1(\gamma)$ that satisfies the identity
\begin{equation}\label{iden}
\int_{\mathbb{R}^d} \varphi \ T_s{\rm div}_{\gamma} u \,d\gamma
=-\int_{\mathbb{R}^d} \langle \nabla T_s \varphi,u\rangle \,d\gamma
=-\int_{\mathbb{R}^d} e^{-s} \langle \nabla \varphi, T_s u\rangle \,d\gamma,
\quad \varphi\in C_0^\infty.
\end{equation}

\begin{lemma}\label{lem2}
Let $u$ be a Borel vector field on $\mathbb{R}^d$ such that
$|u|\in L^1(\gamma)$. Then, for each $s > 0$, there is a function
$$
T_{s}{\rm div}_{\gamma} u\in L^1(\gamma)
$$
satisfying {\rm(\ref{iden})} such that for the measure
$$
\nu_{s} := (T_{s}{\rm div}_{\gamma} u)\cdot\gamma
$$
we have
\begin{equation}\label{u1}
\|\nu_s\|=\|T_{s}{\rm div}_{\gamma} u\|_{L^1(\gamma)} \le
 \frac{e^{-s}}{\sqrt{1-e^{-2s}}} \| u\|_{L^1(\gamma)}
\le
\frac{1}{\sqrt{2s}} \| u\|_{L^1(\gamma)}.
\end{equation}
In addition, $\nu_s(\mathbb{R}^d)=0$ and
\begin{equation}\label{u2}
\|\nu_s\|_{K} \le e^{-s} \| u\|_{L^1(\gamma)}.
\end{equation}
This means that $T_s$ extends to the distributional $\gamma$-divergences of $\gamma$-integrable
vector fields as an operator with values in~$L^1(\gamma)$, i.e.,
$T_{s}{\rm div}_{\gamma}$ extends to a bounded operator from $L^1(\gamma,\mathbb{R}^d)$ to~$L^1(\gamma)$.
\end{lemma}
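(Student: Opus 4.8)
The plan is to establish everything first for smooth compactly supported vector fields, where ${\rm div}_\gamma u={\rm div}\,u-\langle x,u\rangle$ is a genuine function in $C_0^\infty$, and then to pass to the limit using the bound~(\ref{u1}) itself. So let $u$ be smooth with compact support. Then $T_s({\rm div}_\gamma u)\in L^1(\gamma)\cap L^\infty(\gamma)$, and for $\varphi\in C_0^\infty$ Fubini's theorem together with Mehler's formula (equivalently, the symmetry of $T_s$ in $L^2(\gamma)$) gives $\int\varphi\,T_s({\rm div}_\gamma u)\,d\gamma=\int(T_s\varphi)\,{\rm div}_\gamma u\,d\gamma$; since $u$ has compact support this equals $-\int\langle\nabla T_s\varphi,u\rangle\,d\gamma$ by integration by parts with no boundary terms, and $\nabla T_s\varphi=e^{-s}T_s\nabla\varphi$ combined once more with the symmetry of $T_s$ rewrites it as $-e^{-s}\int\langle\nabla\varphi,T_su\rangle\,d\gamma$. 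This is~(\ref{iden}) in the smooth case; moreover, since $\gamma$ is invariant under $T_s$ and $u$ has compact support, $\nu_s(\mathbb{R}^d)=\int T_s({\rm div}_\gamma u)\,d\gamma=\int{\rm div}_\gamma u\,d\gamma=0$ by Gaussian integration by parts.

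The heart of the matter is the dimension-free bound~(\ref{u1}) for smooth compactly supported $u$. Differentiating Mehler's formula under the integral and integrating by parts in the Gaussian variable gives
$$
\nabla T_s\varphi(x)=-\frac{e^{-s}}{\sqrt{1-e^{-2s}}}\int_{\mathbb{R}^d}\varphi\bigl(e^{-s}x-\sqrt{1-e^{-2s}}\,y\bigr)\,y\,\gamma(dy),
$$
so that, keeping the scalar product with $u(x)$ \emph{inside} the integral,
$$
\bigl|\langle\nabla T_s\varphi(x),u(x)\rangle\bigr|
\le\frac{e^{-s}}{\sqrt{1-e^{-2s}}}\,\|\varphi\|_\infty\int_{\mathbb{R}^d}\bigl|\langle y,u(x)\rangle\bigr|\,\gamma(dy)
\le\frac{e^{-s}}{\sqrt{1-e^{-2s}}}\,\|\varphi\|_\infty\,|u(x)|,
$$
the last step being the Cauchy--Schwarz inequality $\int|\langle y,v\rangle|\,\gamma(dy)\le\bigl(\int\langle y,v\rangle^2\,\gamma(dy)\bigr)^{1/2}=|v|$. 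Integrating in $x$ against $\gamma$ and using $\|T_s({\rm div}_\gamma u)\|_{L^1(\gamma)}=\sup\{\int\varphi\,T_s({\rm div}_\gamma u)\,d\gamma:\varphi\in C_0^\infty,\ \|\varphi\|_\infty\le1\}$ together with the identity of the previous paragraph, we obtain the first inequality in~(\ref{u1}); the second follows from $e^{2s}\ge1+2s$. I expect this to be the one genuinely delicate step: estimating $|\nabla T_s\varphi|\,|u|$ separately would cost a factor $\int|y|\,\gamma(dy)$, which grows like $\sqrt d$, whereas $\int|\langle y,u(x)\rangle|\,\gamma(dy)\le|u(x)|$ is dimension-free.

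For an arbitrary Borel $u$ with $|u|\in L^1(\gamma)$, choose smooth compactly supported $u_n\to u$ in $L^1(\gamma,\mathbb{R}^d)$ (possible since $\gamma$ has a positive continuous density). By linearity of $u\mapsto T_s({\rm div}_\gamma u)$ and the bound just proved, $\{T_s({\rm div}_\gamma u_n)\}$ is Cauchy in $L^1(\gamma)$; let $T_s{\rm div}_\gamma u$ be its limit and $\nu_s:=(T_s{\rm div}_\gamma u)\cdot\gamma$. Since $\varphi$ and $\nabla T_s\varphi$ are bounded, the smooth-case identity passes to the limit and yields~(\ref{iden}), and passing to the limit in the bound gives $\|\nu_s\|=\|T_s{\rm div}_\gamma u\|_{L^1(\gamma)}\le\frac{e^{-s}}{\sqrt{1-e^{-2s}}}\|u\|_{L^1(\gamma)}\le\frac{1}{\sqrt{2s}}\|u\|_{L^1(\gamma)}$, that is,~(\ref{u1}); the value of $T_s{\rm div}_\gamma u$ does not depend on the approximating sequence, by interlacing.

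Applying~(\ref{iden}) to cut-off functions $\varphi_R\in C_0^\infty$ with $\varphi_R=1$ on the ball of radius $R$, $0\le\varphi_R\le1$ and $|\nabla\varphi_R|\le2/R$, and bounding the right-hand side by $\|\nabla T_s\varphi_R\|_\infty\|u\|_{L^1(\gamma)}\le(2e^{-s}/R)\|u\|_{L^1(\gamma)}$, then letting $R\to\infty$ (dominated convergence on the left) gives $\nu_s(\mathbb{R}^d)=\int T_s{\rm div}_\gamma u\,d\gamma=0$. For $g\in C_0^\infty\cap{\rm Lip}_1$ the second form of~(\ref{iden}) gives $\int g\,d\nu_s=-e^{-s}\int\langle\nabla g,T_su\rangle\,d\gamma$, whose absolute value is at most $e^{-s}\|T_su\|_{L^1(\gamma)}\le e^{-s}\|u\|_{L^1(\gamma)}$ because $|\nabla g|\le1$ and $T_s$ contracts $L^1(\gamma)$; taking the supremum over such $g$, which computes $\|\nu_s\|_K$ by the remark in the text, yields~(\ref{u2}). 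The claimed boundedness of $T_s{\rm div}_\gamma$ from $L^1(\gamma,\mathbb{R}^d)$ to $L^1(\gamma)$ is then immediate: the construction is linear in $u$ and the operator norm bound is~(\ref{u1}).
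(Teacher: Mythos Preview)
Your proof is correct and follows the same approach as the paper: establish (\ref{iden}) and the two bounds first for smooth compactly supported $u$ via the Mehler-formula representation of $\nabla T_s\varphi$ and the commutation $\nabla T_s\varphi=e^{-s}T_s\nabla\varphi$, then extend by density in $L^1(\gamma,\mathbb{R}^d)$. Your aside about dimension dependence is over-cautious---the paper obtains $|\nabla T_s\varphi(x)|\le\frac{e^{-s}}{\sqrt{1-e^{-2s}}}\|\varphi\|_\infty$ directly by writing the \emph{directional} derivative $\partial_h T_s\varphi$ (which already involves $\langle h,y\rangle$, not $|y|$) and taking $\sup_{|h|=1}$, so no factor $\int|y|\,d\gamma$ ever appears; your device of keeping $u(x)$ inside the integral is equivalent and equally valid.
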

\begin{proof}
Suppose first that $u$ is smooth with compact support.
For all $\varphi \in C^{\infty}_{0}$ we have
$$
\int_{\mathbb{R}^d} T_{s}{\rm div}_{\gamma}u  \, \varphi \,d\gamma =
-\int_{\mathbb{R}^d} \langle u , \nabla T_{s}\varphi\rangle \,d\gamma,
$$
where
$$
\partial_h T_s\varphi(x)
=\frac{e^{-s}}{\sqrt{1-e^{-2s}}}
\int_{\mathbb{R}^d}
\varphi\Bigl(e^{-s}x-\sqrt{1-e^{-2s}}\, y\Bigr)\langle h,y\rangle\, \gamma(dy)
$$
for all $h\in\mathbb{R}^d$.
Hence
$$
|\nabla T_s\varphi(x)|\le \frac{e^{-s}}{\sqrt{1-e^{-2s}}}\|\varphi\|_\infty,
$$
which yields (\ref{u1}). Using that
$$
\nabla T_{s}\varphi =e^{-s}T_{s} \nabla \varphi,
$$
we obtain (\ref{u2}).
In the general case we take a sequence of smooth compactly supported vector
fields $u_j$ converging to $u$ in $L^1(\gamma)$. By (\ref{u1})
the sequence of smooth functions $T_s {\rm div}_\gamma  u_j$ converges
in~$L^1(\gamma)$. The limit is the desired function.
For $u_j$ equality (\ref{iden}) also holds for $\varphi\in C_b^\infty$,
in particular, for $\varphi=1$, hence the integral of $T_s{\rm div}_\gamma u_j$ against
$\gamma$ vanishes. Therefore,
$$
\int_{\mathbb{R}^d} T_s{\rm div}_\gamma u\, d\gamma=0.
$$

It is clear that (\ref{u2}) holds
and (\ref{iden}) remains true in the limit.
\end{proof}

Suppose now that a nonnegative function $f\in L^1(\gamma)$ satisfies the equation
$$
\Delta (f\cdot\gamma)-{\rm div}\, (fb\cdot\gamma)=0
$$
with
$$
b(x)=-x+v(x),
$$
where
$$
|v|\in L^1(\mu), \ \quad{i.e.,} \ f|v|\in L^1(\gamma).
$$
Using the Ornstein--Uhlenbeck operator defined by (\ref{o-u}),
we can rewrite the equation as
\begin{equation}\label{ell2}
\int_{\mathbb{R}^d} \bigl(
L\varphi + \langle v, \nabla \varphi\rangle \bigr) f\,d\gamma = 0
\quad \forall\, \varphi \in C^{\infty}_{0}.
\end{equation}
Let us set
$$
w := f \cdot v.
$$
By assumption $|w| \in L^{1}(\gamma)$.
We have in the sense of distributions
$$
Lf = {\rm div}_{\gamma} w,
$$
where
$$
{\rm div}_{\gamma} w=
{\rm div}\, w-\langle x,w\rangle .
$$
For smooth $w$ and $f$ this  would be
$$
\int_{\mathbb{R}^d} \varphi Lf\,d\gamma =
\int_{\mathbb{R}^d} \varphi \ {\rm div}_{\gamma} w \,d\gamma,
\quad \varphi \in C^{\infty}_{0}.
$$
It is readily seen that identity (\ref{ell2}) remains valid for all $\varphi$
from the class $\mathcal{S}(\mathbb{R}^d)$ of smooth rapidly decreasing functions.
Since $T_s\varphi\in \mathcal{S}(\mathbb{R}^d)$ for all $\varphi\in C_0^\infty$ and $s\ge 0$,
we obtain the identity
\begin{equation}\label{ell3}
\int_{\mathbb{R}^d} \bigl(
LT_s\varphi + \langle v, \nabla T_s\varphi\rangle \bigr) f\,d\gamma = 0
\quad \forall\, \varphi \in C^{\infty}_{0}, \ s\ge 0.
\end{equation}
Therefore, in $L^1(\gamma)$ we have
\begin{equation}\label{Tf}
T_{t}f - f= \int_{0}^t T_{s} {\rm div}_{\gamma} w\,ds,
\end{equation}
where the last integral exists in $L^1(\gamma)$ by Lemma~\ref{lem2}.
Indeed, the integrals with respect to $\gamma$ of both sides multiplied
by any $\varphi\in C_0^\infty$ coincide, because  by (\ref{iden}) and (\ref{ell3})
we have
$$
\int_{\mathbb{R}^d} \varphi T_{s}
{\rm div}_{\gamma} w\,d\gamma=-\int_{\mathbb{R}^d} \langle \nabla T_s\varphi,w\rangle \,
d\gamma=\int_{\mathbb{R}^d} LT_s\varphi f\,d\gamma,
$$
but the integral of the right-hand side in $s$ over $[0,t]$ equals the integral
of $f(T_t\varphi -\varphi)$ against~$\gamma$, which is the integral of $\varphi(T_tf-f)$
by the symmetry of~$T_t$.

\begin{proposition}\label{p1}
Under the assumptions of Theorem~\ref{mt1}  we have
$$
\|f\cdot\gamma  - \gamma\|_{K} \le  \|fv\|_{L^1(\gamma)}=
\| v\|_{L^1(\mu)}.
$$
In addition,
$$
\|T_tf-f\|_{L^1(\gamma)}\le (2t)^{1/2} \| v\|_{L^1(\mu)}.
$$
\end{proposition}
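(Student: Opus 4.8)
The plan is to derive both inequalities from the $L^{1}(\gamma)$-valued identity (\ref{Tf}), that is, $T_{t}f - f = \int_{0}^{t} T_{s}{\rm div}_{\gamma} w\,ds$ with $w = fv$, combined with Lemma~\ref{lem2}. The second inequality is immediate: by the triangle inequality for the Bochner integral together with the bound $\|T_{s}{\rm div}_{\gamma} w\|_{L^{1}(\gamma)} \le (2s)^{-1/2}\|w\|_{L^{1}(\gamma)}$ of (\ref{u1}),
$$
\|T_{t}f - f\|_{L^{1}(\gamma)} \le \|w\|_{L^{1}(\gamma)}\int_{0}^{t} (2s)^{-1/2}\,ds = (2t)^{1/2}\|w\|_{L^{1}(\gamma)},
$$
and $\|w\|_{L^{1}(\gamma)} = \|fv\|_{L^{1}(\gamma)} = \|v\|_{L^{1}(\mu)}$.

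For the Kantorovich bound I would fix a smooth compactly supported $1$-Lipschitz function $\varphi$ and pair it with (\ref{Tf}); this is legitimate because $\varphi$ is bounded and $s\mapsto\|T_{s}{\rm div}_{\gamma} w\|_{L^{1}(\gamma)}$ is integrable on $[0,t]$ by (\ref{u1}), so the order of integration may be interchanged. Writing $\nu_{s} = (T_{s}{\rm div}_{\gamma} w)\cdot\gamma$ as in Lemma~\ref{lem2}, so that $\int\varphi\,d\nu_{s} = \int\varphi\,T_{s}{\rm div}_{\gamma} w\,d\gamma$, one obtains
$$
\int_{\mathbb{R}^{d}}\varphi\,(T_{t}f - f)\,d\gamma = \int_{0}^{t}\Bigl(\int_{\mathbb{R}^{d}}\varphi\,d\nu_{s}\Bigr)\,ds .
$$
Since $\varphi$ is $1$-Lipschitz and $\nu_{s}(\mathbb{R}^{d}) = 0$, the inner integral is at most $\|\nu_{s}\|_{K} \le e^{-s}\|w\|_{L^{1}(\gamma)}$ in absolute value by (\ref{u2}), hence the left-hand side is bounded by $(1 - e^{-t})\|w\|_{L^{1}(\gamma)} \le \|w\|_{L^{1}(\gamma)}$ for every $t > 0$. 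Letting $t\to\infty$ and using the ergodicity property $T_{t}f \to \int f\,d\gamma = 1$ in $L^{1}(\gamma)$ of the Ornstein--Uhlenbeck semigroup, the left-hand side converges (here $\varphi$ is bounded) to $\int\varphi\,d\gamma - \int\varphi f\,d\gamma = -\int\varphi\,d(f\cdot\gamma - \gamma)$, so that
$$
\Bigl|\int_{\mathbb{R}^{d}}\varphi\,d(f\cdot\gamma - \gamma)\Bigr| \le \|w\|_{L^{1}(\gamma)} = \|v\|_{L^{1}(\mu)}
$$
for every smooth compactly supported $1$-Lipschitz $\varphi$.

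It remains to pass from this class of test functions to the full Kantorovich norm. Applying the last inequality to smooth compactly supported $1$-Lipschitz approximations of $\min(|x|, R)$ and using that $f\cdot\gamma - \gamma$ is a finite signed measure, one first gets $\int\min(|x|, R)\,d(f\cdot\gamma - \gamma) \le \|v\|_{L^{1}(\mu)}$ for each $R$; letting $R\to\infty$ gives $\int|x|\,f\,d\gamma \le \|v\|_{L^{1}(\mu)} + \int|x|\,d\gamma < \infty$, so $f\cdot\gamma$ has finite first moment and $\|f\cdot\gamma - \gamma\|_{K}$ is the genuine Kantorovich norm. Recalling that this norm equals the supremum of $\int\varphi\,d(f\cdot\gamma - \gamma)$ over smooth compactly supported $1$-Lipschitz $\varphi$, we conclude $\|f\cdot\gamma - \gamma\|_{K} \le \|v\|_{L^{1}(\mu)}$.

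I do not expect a genuine obstacle here; the argument is a careful assembly of (\ref{Tf}), (\ref{u1}), (\ref{u2}) and the definition of $\nu_{s}$. The only points demanding attention are the interchange of $\int_{0}^{t}$ with $\int_{\mathbb{R}^{d}}$ — covered by the $L^{1}(\gamma)$-integrability of $s\mapsto T_{s}{\rm div}_{\gamma} w$ near $s = 0$ guaranteed by (\ref{u1}) — and the limit $t\to\infty$, which relies only on the standard convergence $T_{t}f \to 1$ in $L^{1}(\gamma)$.
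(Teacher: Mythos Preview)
Your proposal is correct and follows essentially the same route as the paper: both arguments pair a test function with the identity (\ref{Tf}), apply the bounds (\ref{u1}) and (\ref{u2}) from Lemma~\ref{lem2}, and then let $t\to\infty$ using $T_{t}f\to 1$ in $L^{1}(\gamma)$. Your treatment of the passage from smooth compactly supported $1$-Lipschitz test functions to the full Kantorovich norm is slightly more explicit (first establishing that $f\cdot\gamma$ has finite first moment), whereas the paper simply invokes Fatou's theorem; but this is a difference of exposition, not of substance.
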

\begin{proof}
It follows by (\ref{Tf}) and (\ref{u2})
that for all $t>0$ and $\varphi\in C_0^\infty(\mathbb{R}^d)$ we have
$$
\biggl|\int_{\mathbb{R}^d} \varphi T_tf\, d\gamma
- \int_{\mathbb{R}^d} \varphi f\, d\gamma\biggr|
=\biggl| \int_{\mathbb{R}^d}
\int_{0}^t \varphi T_{s} {\rm div}_{\gamma} w\,ds \, d\gamma
\biggr|
\le \|fv\|_{L^1(\gamma)} \|\nabla\varphi\|_\infty
\int_0^t e^{-s}\, ds.
$$
It remains to recall that  $T_{t}f$ converges to $1$ in $L^1(\gamma)$
as $t\to\infty$. The second estimate follows similarly
by using~(\ref{u1}). The passage to all $1$-Lipschitz functions
in place of smooth compactly supported ones is easily justified by Fatou's theorem
taking into account that the Gaussian measure has finite first moment.
\end{proof}

The bound $\|T_tf-f\|_{L^1(\gamma)}\le Ct^{1/2}$ can be
regarded as the inclusion  of $f$ in a certain fractional Besov type space
with respect to~$\gamma$ in the spirit of \cite{BKP} and~\cite{BKZ}.

It is worth noting that this proposition yields that
the norm $|x|$ is $\mu$-integrable, so  $\|\mu\|_K<\infty$, which is not obvious
in advance (and has not been assumed in the proof)
and does not follow from the $\gamma$-integrability of the function
$f(\log (f+1))^\alpha$ with~$\alpha<1/4$,
unlike the case where $f(\log (f+1))^{1/2}$ is $\gamma$-integrable
(which holds if $f\in W^{1,1}(\gamma)$ or at least $f\in BV(\gamma)$, but this
can fail in our situation according to~\cite{BPS}). It is also known
(see \cite[Lemma~2.3]{AF}) that
the bound $\|T_tf-f\|_{L^1(\gamma)}\le Ct^{1/2}$ holds if $f\in BV(\gamma)$.

\begin{proof}[Proof of Theorem~\ref{mt1}]
Let $t_n := n^{-\beta}$, where $\beta>1$ will be picked later.
Set
$$
g_{n} := |T_{t_{n + 1}}f - T_{t_{n}}f|,
$$
$$
V:=\| v\|_{L^1(\mu)}.
$$
For simplicity, we omit indication of $\mathbb{R}^d$ when integrating over the whole space.
Let us observe that by Proposition~\ref{p1} there is a number $C(\beta)$, depending
only on~$\beta$, such that
\begin{equation}\label{fe1}
\int g_{n}\,d\gamma \le C(\beta) n^{-(1+\beta)/2}V,
\quad n\ge 1.
\end{equation}
In addition,
\begin{equation}\label{fe2}
\int g_{n}\bigl(\log(g_{n}+1)\bigr)^{1/2}\,d\gamma
\le 2\sqrt{\log 2}+ n^{\beta/2}V.
\end{equation}
This bound is obtained as follows:
$$
\int g_{n}\bigl(\log(g_{n}+1)\bigr)^{1/2}\,d\gamma
\le
\int (T_{t_{n + 1}}f)\bigl(\log(T_{t_{n + 1}}f+1)\bigr)^{1/2}\,d\gamma
+
\int (T_{t_{n}}f)\bigl(\log(T_{t_{n}}f+1)\bigr)^{1/2}\,d\gamma,
$$
because $|f_1-f_2|\bigl(\log (|f_1-f_2|+1)\bigr)^{1/2}$
is dominated pointwise by the sum of the functions
$f_{1}\bigl(\log (f_1+1)\bigr)^{1/2}$ and
$f_{2}\bigl(\log (f_2+1)\bigr)^{1/2}$
whenever $f_1,f_2\ge 0$. It remains to apply
Lemma~\ref{elog} and Proposition~\ref{p1}.

By H\"older's inequality and (\ref{fe1}), (\ref{fe2}) we have
\begin{align*}
\int g_{n}\bigl(\log(g_{n}+1)\bigr)^{\alpha}\,d\gamma
&\le \biggl[\int g_{n}\,d\gamma \biggr]^{1-2\alpha}
\biggl[\int g_{n}\bigl(\log(g_{n}+1)\bigr)^{1/2} \,d\gamma
\biggr]^{2\alpha}
\\
&\le C(\alpha,\beta) n^{\alpha\beta -(1+\beta-2\alpha-2\alpha\beta)/2}V
\\
&+C(\alpha,\beta)n^{-(1+\beta)(1-2\alpha)/2} V^{1-2\alpha}.
\end{align*}
If $\alpha<1/4$ is fixed, we can find $\beta$ large enough
(namely, $\beta> (2\alpha +1)/(1-4\alpha)$)
so that the powers obtained will be less than $-1$.
Since
$$
f \le T_{1}f + \sum_{n}g_{n},
$$
for obtaining
the desired inequality it remains to apply
the triangle inequality for the corresponding Orlicz norm. To this end,
we estimate the Luxemburg norms of~$g_n$. Recall (see~\cite{KR}) that the Luxemburg
norm is equivalent to the Orlicz norm and in our case is defined by
$$
\|g\|_L=\inf
\biggl\{s>0\colon \int \frac{g}{s}
\Bigl(\log \Bigl(\frac{g}{s}+1\Bigr)\Bigr)^{\alpha}\, d\gamma\le 1\biggr\}.
$$
Now let us bound
$$
\int \frac{g_n}{s}
\Bigl(\log \Bigl(\frac{g_n}{s}+1\Bigr)\Bigr)^{\alpha}\, d\gamma
$$
via $\|g_n\|_{L^1(\gamma)}$ and the integral of
$g_{n}\bigl(\log(g_{n}+1)\bigr)^{\alpha}$:
\begin{align*}
\int \frac{g_n}{s}
\Bigl(\log \Bigl(\frac{g_n}{s}+1\Bigr)\Bigr)^{\alpha}\, d\gamma
&=
\int \frac{g_n}{s}
\Bigl(\log \Bigl(\frac{g_n + s}{s}\Bigr)\Bigr)^{\alpha}\, d\gamma
\\
&\le
\int \frac{g_n}{s}
\Bigl(\log (g_n + 1) + \log (1/s + 1)\Bigr)^{\alpha}\, d\gamma
\\
&\le
\int \frac{g_n}{s}
\Bigl[\Bigl(\log (g_{n}+1)\Bigr)^\alpha +
\Bigl(\log (1/s + 1)\Bigr)^{\alpha}\Bigr]\, d\gamma
\\
&\le
\frac{1}{s} \int g_{n}\Bigl(\log (g_n+1)\Bigr)^{\alpha} +
\frac{1}{s}\Bigl(\log (1/s + 1)\Bigr)^{\alpha}
\int g_n\, d\gamma.
\end{align*}
Hence
$$
\|g_n\|_L\le C(\alpha) n^{-  \delta }
(1 + V) \Bigl(1 + \bigl(\log(1 + V)\bigr)^{\alpha}\Bigr),
\ \delta = \delta(\alpha) > 1.
$$
Finally, we obtain convergence of the Luxemburg norms of the functions~$g_n$ and, consequently,
$$
\|f\|_{L} \le C(\alpha)(1 + V)
\Bigl(1 + \bigl(\log (1 + V)\bigr)^{\alpha}\Bigr).
$$
It remains to bound the integral of $f(\log (f+1))^\alpha$.
Let $L : = \|f\|_{L}$, $g: = f/L$.
Using the same arguments as above we have
\begin{align*}
\int f\bigl(\log (f+1)\bigr)^\alpha\, d\gamma
&\le
L \int g\bigl(\log (g+1)\bigr)^\alpha\, d\gamma +
L (\log(L + 1))^{\alpha} \int_{\mathbb{R}^d}g\,d\gamma
 \\
&\le L + (\log(L + 1))^{\alpha}
\\
&\le C(\alpha)(1 + V)\Bigl(1 + \bigl(\log(1 + V)\bigr)^{\alpha}\Bigr).
\end{align*}
Thus, we have obtained the desired bound.
\end{proof}

\begin{remark}
\rm
As a corollary, we can obtain the following bound on the tail distribution of~$f$:
$$
\gamma \bigl(x\colon f(x) > \lambda \bigr) \le
C\frac{\log \log \lambda}{\lambda \log^{1/3}\lambda}, \quad \lambda \ge \lambda_0.
$$
Indeed, for the operator $A_1$ defined by
$$
A_{t} := \frac{1}{t}\int_0^t T_{s}\,ds,
$$
Talagrand's result \cite{T} yields the bound
$$
\gamma
\bigl(x\colon  A_{1}f(x) > \lambda \bigr)
\le \frac{C \log\log \lambda}{\lambda\log \lambda}, \ \lambda \ge \lambda_0.
$$
Then
\begin{align*}
\gamma \bigl(x\colon f(x) > \lambda \bigr)
&\le
\gamma \bigl(x\colon A_{t}f(x) > \lambda/2 \bigr) +
\gamma \bigl(x\colon |f - A_{t}f|(x) > \lambda/2 \bigr)
\\
&
\le
\frac{1}{\lambda t}\frac{C' \log (\log \lambda + \log t)}{\log \lambda + \log t} +
t^{1/2}\frac{2}{\lambda}\| v\|_{L^1(\mu)}.
\end{align*}
Taking $t := (\log\lambda)^{-2/3}$ and assuming
that $\lambda$ is sufficiently large, we obtain the announced bound.
Let us observe that a somewhat worse bound can be obtained from Lehec's result~\cite{Leh}
for $T_t$ in place of~$A_t$.
\end{remark}

\begin{remark}
\rm
The following inequality  was established in \cite{BWS15}
for functions $f\in W^{1,1}(\gamma)$:
$$
\|f - 1\|_{L^1(\gamma)}^{2} \le
2\| f \cdot \gamma  - \gamma \|_{K} \| \nabla f\|_{L^1(\gamma)}.
$$
This inequality is a generalization of the classical Hardy--Landau--Littlewood
inequality for functions on the real line (see also \cite{BS15} and~\cite{BWS16}).
Using the same reasoning as in \cite{BWS15} and the estimates obtained above
one can prove  that for $f$ in Theorem~\ref{mt1} one has
$$
\|f - 1\|_{L^1(\gamma)}^{2} \le 2
\| f \cdot \gamma  - \gamma \|_{K} \| v\|_{L^1(\mu)}
\le 2 \| v\|_{L^1(\mu)}^2.
$$
A different derivation of this bound has been given by A.F.~Miftakhov
(see~\cite{BMS}).
\end{remark}

\begin{remark}
\rm
It is plain that the main theorem is based on two ingredients:  an a priori bound
on the integral of $T_tf[\log (T_tf+1)]^{1/2}$ by $Ct^{-1/2}$ and the estimate
$\|T_tf-f\|_1\le Ct^{1/2}$. No special properties of the semigroup
are needed to derive the final assertion. In turn, the first bound
is a corollary of Wang's log-Harnack inequality and a finite Kantorovich distance
between $\mu$ and~$\gamma$.
Wang's method applies to a broad class of diffusion semigroups
(see \cite{ATW}, \cite{RW}, \cite{W97}, \cite{W1}, \cite{W13} and~\cite{W14}).
Therefore, within the class of such semigroups, the remaining questions are about
the Kantorovich distance and the estimate for $\|T_tf-f\|_1$,
which are handled in Proposition~\ref{p1}.
In turn, these estimates are based on the following properties of the semigroup:
$$
|\nabla T_t\varphi|\le e^{-Ct}\|\nabla\varphi\|_\infty,
\quad
|\nabla T_t\varphi|\le Ct^{-1/2}\|\varphi\|_\infty.
$$
Both properties hold for a broad class of diffusion semigroups (see below).
Of course, constants may be different and the Kantorovich norm
must be taken with respect to the intrinsic metric. For example,
in place of the standard Ornstein--Uhlenbeck semigroup we can consider the semigroup
$$
T_t^B\varphi (x)=\int \varphi\Bigl(e^{-tB}x-\sqrt{1-e^{-2tB}}\, y\Bigr)\, \gamma_B(dy),
$$
where $B$ is a positive definite operator on $\mathbb{R}^d$ and $\gamma_B$ is the centered
Gaussian measure with covariance $B^{-1}$. The measure $\gamma_B$ is invariant for~$\{T_t^B\}_{t\ge0}$
and satisfies the stationary equation with the drift $-Bx$. Assume that
$B\ge \beta_1 I$, where $\beta_1>0$ is the minimal eigenvalue of~$B$.
Suppose that $\mu$ satisfies the stationary equation with $b(x)=-Bx+v(x)$.
Then $\mu$ is absolutely continuous with respect to~$\gamma_B$ and
the integral of $f[\log (f+1)]^\alpha$ for $f=d\mu/d\gamma_B$
is finite for all~$\alpha<1/4$. Moreover, if $\beta_1\ge 1$, then
(\ref{b1}) remains valid, in the general case a constant depending
on $\beta_1$ will appear.

Indeed,  we have
$$
\nabla T_t^B\varphi =e^{-tB}T_t^B\nabla\varphi.
$$
Therefore, in the corresponding analog of Proposition~\ref{p1}
for any $1$-Lipschitz function $\varphi$ we have
$$
\int_0^\infty\int_{\mathbb{R}^d}
\langle e^{-tB}T_t^B \nabla\varphi ,u\rangle\, d\gamma_B\, dt
\le
\int_{\mathbb{R}^d} \int_0^\infty
e^{-t\beta _1}|u|\, dt\, d\gamma_B=\beta_1^{-1}\|u\|_{L^1(\gamma_B)}.
$$
Hence in the bound for $\|f\cdot\gamma_B- \gamma_B\|_K$ we have to replace
$|v|$ by $\beta_1^{-1}|v|$. Next, for estimating $\|T_t^Bf-f\|_{L^1(\gamma_B)}$
we use the equality
$$
\partial_h T_s^B\varphi(x)
=
\int_{\mathbb{R}^d}
\varphi\Bigl(e^{-sB}x-\sqrt{1-e^{-2sB}}\, y\Bigr)
\langle e^{-sB}(1-e^{-2sB})^{-1/2} h, By\rangle\, \gamma_B(dy).
$$
The integral of $y\mapsto |\langle z,B^{1/2}y\rangle|$ with respect to $\gamma_B$
is estimated by~$|z|$,
and
$$
|e^{-sB}(1-e^{-2sB})^{-1/2}B^{1/2}h|\le (2s)^{-1/2}|h|.
$$
Hence $|\nabla T_s^B\varphi|\le (2s)^{-1/2}\|\varphi\|_\infty$, which gives
the same estimate for $\|T_t^Bf-f\|_{L^1(\gamma_B)}$ as in Proposition~\ref{p1}.
Wang's log-Harnack inequality also holds in this case. If $\beta_1\ge 1$, then
it holds without any change; for any $\beta_1>0$ it holds with the additional
factor $e$ in front of~$|x-y|^2$.
This follows from the case $B=I$ by changing variables and observing that the norm of
$e^{t}(1-e^{-2t})^{1/2}B^{1/2} (I-e^{-2tB})^{-1/2}$ is estimated by~$e$ if $\beta_1>0$
and is estimated by~$1$ if $\beta_1\ge 1$.
Thus, the proof given above applies.
\end{remark}

Let us formulate an analog of Theorem~\ref{mt1} for manifolds.
Let  $(M,\varrho)$ be a connected complete Riemann manifold. In place of the
Ornstein--Uhlenbeck operator we consider the operator
$$
L\varphi=\Delta  \varphi+\langle Z,\nabla\varphi\rangle,
$$
where $Z$ is a smooth
vector field  satisfying the curvature condition
$$
{\rm Ric}(X,X)-\langle \nabla_X, Z\rangle\ge
-K |X|^2
$$
with some number $K\in\mathbb{R}$.
In the case of the standard Ornstein--Uhlenbeck operator we have
${\rm Ric}=0$, $K=-1$, $Z(x)=-x$, $\langle \nabla_X, Z\rangle=-|X|^2$.
It is known  (see \cite[Theorem~2.3.3]{W14})
that the diffusion semigroup $\{P_t\}$
generated by the operator $L$
satisfies the inequality
$$
P_t(\log f)(x)\le \log P_tf(y)+\frac{K}{2(1-e^{-2tK})} \varrho(x,y)^2
$$
for all $P_t$-integrable functions $f>0$. It is also known
(see, in particular, \cite[Theorem~2.3.1]{W14}) that
$$
|\nabla P_t \varphi|\le e^{tK}P_t|\nabla \varphi|,
\quad
|\nabla P_t \varphi|\le Ct^{-1/2}\|\varphi\|_\infty.
$$
In addition, there is a probability measure $\mu_0$ on~$M$ satisfying the equation
$L^*\mu_0=0$ (see \cite[Theorem~3.4]{BRW01}).
It follows from the previous remark that the following assertion is true.

\begin{theorem}
Suppose that a probability measure $\mu$ on~$M$ satisfies
the perturbed  equation
$$L_v^*\mu=0,
$$
 where
 $$
 L_v\varphi=L\varphi+\langle v,\nabla \varphi\rangle
 $$
and $v$ is a Borel vector field on~$M$ such that $|v|\in L^1(\mu)$. Then
$\mu$ has a density $f$ with respect to~$\mu_0$ and $f(\log (f+1))^\alpha\in L^1(\mu_0)$
for all $\alpha<1/4$. Moreover, there is a number $C$ depending on $\alpha$ and $K$ such that
$$
\int_{M} f \bigl(\log(f+1)\bigr)^{\alpha}\, d\mu_0 \le
C(\alpha,K)\Bigl[
1 + \| v\|_{L^1(\mu)}
\Bigl(\log (1 + \| v\|_{L^1(\mu)})\Bigr)^{\alpha}\Bigr].
$$
\end{theorem}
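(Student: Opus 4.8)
The plan is to run the proof of Theorem~\ref{mt1} almost verbatim, replacing $\gamma$ by $\mu_0$, the Ornstein--Uhlenbeck semigroup $\{T_t\}$ by $\{P_t\}$, and $\mathrm{div}_\gamma$ by the $\mu_0$-divergence $\mathrm{div}_{\mu_0}$ defined via $\int_M \varphi\,\mathrm{div}_{\mu_0} u\, d\mu_0=-\int_M \langle\nabla\varphi,u\rangle\, d\mu_0$, so that $L\varphi=\mathrm{div}_{\mu_0}\nabla\varphi$; here $|v|$ and $|\nabla\varphi|$ are Riemannian and $\|\cdot\|_K$ is the $1$-Kantorovich norm built from the geodesic distance~$\varrho$. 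The point is that the concluding part of the proof of Theorem~\ref{mt1} — the telescoping $f\le P_1 f+\sum_n g_n$, H\"older's inequality with exponents $1-2\alpha$ and $2\alpha$, the choice $\beta>(2\alpha+1)/(1-4\alpha)$, and the passage through Luxemburg norms — uses no structure beyond the two a priori bounds on $\int_M (P_t f)(\log(P_t f+1))^{1/2}\, d\mu_0$ and on $\|P_t f-f\|_{L^1(\mu_0)}$, and carries over unchanged; only the manifold analogs of Lemma~\ref{lem1}, Lemma~\ref{lem2} and Proposition~\ref{p1} have to be re-derived. That $\mu$ admits a density $f=d\mu/d\mu_0$ follows from local regularity for stationary Fokker--Planck--Kolmogorov equations on manifolds (the local analog of~\cite{BKR01}): $\mu$ acquires a positive density with respect to Riemannian volume, and the invariant measure $\mu_0$ has a positive smooth density, since $L$ is elliptic with smooth~$Z$.

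The manifold analog of Lemma~\ref{lem1} is obtained by repeating its proof with Wang's log-Harnack inequality
\[
P_t(\log h)(x)\le \log P_t h(y)+\frac{K}{2(1-e^{-2tK})}\,\varrho(x,y)^2
\]
in place of~(\ref{ew1}): one takes $h=P_t g+1$, passes to square roots, integrates against a $1$-optimal transport plan $\sigma$ for $g\cdot\mu_0$ and $\|g\|_{L^1(\mu_0)}\mu_0$, and uses $P_t h\ge1$ together with Jensen's inequality. The only new ingredient is that for $t\in(0,1]$ the curvature factor obeys $\tfrac{K}{2(1-e^{-2tK})}\le C(K)t^{-1}$ (with $C(K)\le\tfrac14$ when $K\le0$), which gives
\[
J_t(g):=\int_M (P_t g)\bigl(\log(P_t g+1)\bigr)^{1/2}\, d\mu_0\le \|g\|_{L^1(\mu_0)}\bigl(\log(\|g\|_{L^1(\mu_0)}+1)\bigr)^{1/2}+C(K)\bigl\|g\cdot\mu_0-\|g\|_{L^1(\mu_0)}\mu_0\bigr\|_K\, t^{-1/2}.
\]

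For the analogs of Lemma~\ref{lem2} and Proposition~\ref{p1} one uses the two gradient bounds $|\nabla P_s\varphi|\le e^{sK}P_s|\nabla\varphi|$ and $|\nabla P_s\varphi|\le C(K)s^{-1/2}\|\varphi\|_\infty$. For $|u|\in L^1(\mu_0)$ and $s>0$ one defines $P_s\mathrm{div}_{\mu_0}u\in L^1(\mu_0)$ through $\int_M\varphi\, P_s\mathrm{div}_{\mu_0}u\, d\mu_0=-\int_M\langle\nabla P_s\varphi,u\rangle\, d\mu_0$ by approximating $u$ in $L^1(\mu_0)$ by smooth compactly supported fields; the first gradient bound yields $\|(P_s\mathrm{div}_{\mu_0}u)\cdot\mu_0\|_K\le e^{sK}\|u\|_{L^1(\mu_0)}$ and the second yields $\|P_s\mathrm{div}_{\mu_0}u\|_{L^1(\mu_0)}\le C(K)s^{-1/2}\|u\|_{L^1(\mu_0)}$. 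Rewriting $L_v^*\mu=0$ as $\int_M\bigl(LP_s\varphi+\langle v,\nabla P_s\varphi\rangle\bigr)f\, d\mu_0=0$ and arguing exactly as in the derivation of~(\ref{Tf}) (using that $\{P_t\}$ is symmetric with respect to~$\mu_0$, as in the Ornstein--Uhlenbeck case, or, otherwise, working with the $\mu_0$-adjoint semigroup, which satisfies a curvature bound of the same form), one gets $P_t f-f=\int_0^t P_s\mathrm{div}_{\mu_0}(fv)\, ds$ in $L^1(\mu_0)$, whence $\|P_t f-f\|_{L^1(\mu_0)}\le C(K)t^{1/2}\|v\|_{L^1(\mu)}$ for every~$K$, and, testing against $1$-Lipschitz functions and letting $t\to\infty$ (using $L^1(\mu_0)$-ergodicity $P_t f\to1$, valid for $K<0$ by the Bakry--\'Emery criterion), $\|f\cdot\mu_0-\mu_0\|_K\le\bigl(\int_0^\infty e^{sK}\, ds\bigr)\|fv\|_{L^1(\mu_0)}=|K|^{-1}\|v\|_{L^1(\mu)}$. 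Feeding these into the proof of Theorem~\ref{mt1} (with $t_n=n^{-\beta}$, $g_n=|P_{t_{n+1}}f-P_{t_n}f|$, $V=\|v\|_{L^1(\mu)}$, and using $P_{t_n}f=P_{t_{n+1}}P_{t_n-t_{n+1}}f$) produces $\int_M g_n\, d\mu_0\le C(\beta,K)n^{-(1+\beta)/2}V$ and $\int_M g_n(\log(g_n+1))^{1/2}\, d\mu_0\le C(K)+C(K)n^{\beta/2}V$, and the remainder of that proof then gives the asserted bound with $C$ replaced by~$C(\alpha,K)$.

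The step I expect to be the main obstacle is the manifold analog of Proposition~\ref{p1}, more precisely securing finiteness together with the quantitative bound $\|f\cdot\mu_0-\mu_0\|_K\le C(K)\|v\|_{L^1(\mu)}$: this needs $\int_0^\infty e^{sK}\, ds<\infty$, i.e. the confinement furnished by the curvature condition in the regime~$K<0$ (as in the Ornstein--Uhlenbeck and $\gamma_B$ cases), together with the $L^1(\mu_0)$-convergence of $P_t f$ to~$1$; for $K\ge0$ one must instead invoke an a priori finiteness of the first moments of $\mu$ and $\mu_0$ with respect to~$\varrho$, which merely yields $\|f\cdot\mu_0-\mu_0\|_K<\infty$ and a bound involving those moments in addition to~$V$. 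One should also check that the distributional identity $Lf=\mathrm{div}_{\mu_0}(fv)$, and hence the representation $P_t f-f=\int_0^t P_s\mathrm{div}_{\mu_0}(fv)\, ds$, are legitimate — this rests on $fv\in L^1(\mu_0)$ and on the manifold version of Lemma~\ref{lem2}, i.e. that $P_s\mathrm{div}_{\mu_0}$ extends to a bounded operator from $L^1(\mu_0,TM)$ to $L^1(\mu_0)$ — and that the $1$-Kantorovich norm and $1$-optimal transport plans behave on $(M,\varrho)$ as on~$\mathbb{R}^d$, which holds for a connected complete Riemannian manifold.
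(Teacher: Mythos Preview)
Your proposal is correct and follows essentially the same approach as the paper, which proves this theorem only by pointing to the preceding remark: the argument of Theorem~\ref{mt1} goes through verbatim once one replaces Wang's log-Harnack inequality~(\ref{ew1}) by its Riemannian version and the Ornstein--Uhlenbeck gradient bounds by $|\nabla P_t\varphi|\le e^{tK}P_t|\nabla\varphi|$ and $|\nabla P_t\varphi|\le Ct^{-1/2}\|\varphi\|_\infty$. You are in fact more careful than the paper in flagging the need for $K<0$ (to make $\int_0^\infty e^{sK}\,ds$ finite and to have $P_tf\to 1$) and the symmetry issue for $\{P_t\}$ when $Z$ is not a gradient; the paper leaves these points implicit.
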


In the case of $\mathbb{R}^d$ this result applies to smooth $Z$ such that
$$
\langle Z(x)-Z(y), x-y\rangle \le - k |x-y|^2
$$
for some number $k>0$. In particular, one can take for $Z$ a negative definite linear
operator.

\begin{remark}
\rm
Since our main assumption is the integrability of $v$ with respect to the solution~$\mu$
that typically is not explicitly given,
it is of interest to ensure this integrability in terms of $v$ (or~$b(x)=-x+v(x)$) without using~$\mu$.
A sufficient condition can be given by using Lyapunov functions: if $v$ is locally
bounded, it suffices to have a
function $V\in C^2(\mathbb{R}^d)$ such that $V(x)\to+\infty$ as $|x|\to +\infty$
and
$$
\Delta V(x)+\langle b(x), \nabla V(x)\rangle \le C-|v(x)|
$$
with some positive constant $C$. In this case $\| v\|_{L^1(\mu)}\le C$.
For example, if
$$\langle v(x),x\rangle \le -C_1<-d \quad \hbox{outside some ball and} \quad
|v(x)|\le C_2\exp(|x|^2/2),
$$
then one can take $V(x)=\exp(|x|^2/2)$ and conclude that there is a unique probability solution
$\mu$ and $|v|\in L^1(\mu)$.
\end{remark}

\section{Infinite-dimensional extensions}

The results of the previous section admit straightforward infinite-dimensional extensions.
Let $X$ be a locally convex space, let $X^*$ be the topological dual of~$X$,
 and let $\gamma$ be a centered Radon Gaussian measure on~$X$.
This means that $\gamma$ is a Borel probability measure such that for every Borel set $B$ and
every $\varepsilon>0$ there is a compact set $K\subset B$ with $\gamma(B\backslash K)<\varepsilon$,
and, in addition, every functional $l\in X^{*}$ is a centered Gaussian random variable, i.e.,
is either zero almost  everywhere or
$$
\gamma (x\colon l(x)<s)=\frac{1}{\sqrt{2\pi \sigma}}\int_{-\infty}^s
e^{-u^2/(2\sigma)}\, du,
$$
where $\sigma=\|l\|_{L^2(\gamma)}^2$.

The Cameron--Martin space $H$ of $\gamma$ consists of all vectors with finite norm
$$
|h|_H=\sup \{l(h)\colon l\in X^{*}, \, \|l\|_{L^2(\gamma)}\le 1\}.
$$
This is a separable Hilbert space with respect to the norm $|\,\cdot\,|_H$
(called the Cameron--Martin norm), the corresponding inner product
is denoted by $(h,k)_H$.

The most important example is the countable power of the standard Gaussian
measure on the real line, which is defined on the space $\mathbb{R}^\infty$ of all
real sequences (or on a suitable Hilbert subspace of full measure). The corresponding
Cameron--Martin space is the usual space~$l^2$. Moreover, by the celebrated Tsirelson theorem,
every centered Radon Gaussian measure with an infinite-dimensional
Cameron--Martin space is isomorphic to this particular example by means of a measurable
linear mapping. Hence we can assume without loss of generality that $\gamma$ below is this
countable product on~$\mathbb{R}^\infty$. Given a Borel vector field
$$
v\colon X\to H,
$$
one can define solutions to the stationary equation
$$
L_b^{*}\mu=0, \quad b(x)=-x+v(x),
$$
as follows.
First, dealing with~$X=\mathbb{R}^\infty$,
  we introduce the class $\mathcal{F}\mathcal{C}_0$ of test functions of the form
  $$
\varphi(x)=\varphi_0(x_1,\ldots,x_n), \quad \varphi_0\in C_0^\infty(\mathbb{R}^n).
  $$
The reader is warned that this class is not a linear space.
For a general locally convex space, an analogous class consists of cylindrical functions.

Next, we define the
Ornstein--Uhlenbeck operator $L$ on $\mathcal{F}\mathcal{C}_0$ by
$$
L\varphi (x)=\sum_{i=1}^\infty [\partial_{x_i}^2\varphi(x) -x_i \partial_{x_i}\varphi(x)].
$$
Obviously, for each $\varphi\in \mathcal{F}\mathcal{C}_0$ this is a finite sum.
Let
$$
v=(v_i), \quad |v(x)|_H^2=\sum_{i=1}^\infty v_i(x)^2.
$$
The operator $L_b$ with $b(x)=-x+v(x)$ is defined by
$$
L_b\varphi (x)=L\varphi(x)+ \sum_{i=1}^\infty v_i(x) \partial_{x_i}\varphi(x).
$$
Finally, if $\mu$ is a Borel probability measure on~$X$ such that
$v_i\in L^1(\mu)$ for all $i$ and
$$
\int_X L_b\varphi\, d\mu=0
\quad \forall\, \varphi\in \mathcal{F}\mathcal{C}_0,
$$
then we say that $\mu$ satisfies the equation $L_b^{*}\mu=0$.
Note that $L_b\varphi$ is bounded, since if $\varphi$ depends on $x_1,\ldots,x_n$, then
the functions $x_i\partial_{x_i}\varphi$ are bounded.

In the case of an abstract locally convex space~$X$ the definition is analogous:
there are even two similar options. For a class
of test functions one can use functions of the form $\varphi(l_1,\ldots,l_n)$,
where $\varphi\in C_0^\infty(\mathbb{R}^n)$, $l_i\in X^{*}$. Alternatively, one can fix
a sequence $\{l_i\}\subset X^{*}$ (say, if there is a sequence separating points) and take only $l_i$
from this sequence. However, the definition of $L_b$ becomes a bit more technical
(see \cite[Section~4]{B-umn18}), because
it involves also~$H$. Let $\{e_i\}$ be an orthonormal basis in~$H$ such that
there is a sequence $\{\widehat{e}_j\}\subset X^*$ for which $\widehat{e}_i(e_j)=\delta_{ij}$.
Every functional $l\in X^{*}$ has a continuous restriction to~$H$, hence there is a vector
$\widetilde{l}\in H$ with $l(u)=(\widetilde{l},u)_H$ for all~$u\in H$.
Note that $\widetilde{\widehat{h}}=h$.
The operator $L_b\varphi$ is defined by
$$
L_b\varphi=L\varphi+\sum_{i=1}^\infty (v,e_i)_H \partial_{e_i}\varphi,
$$
$$
L\varphi=\sum_{i=1}^\infty [\partial_{e_i}^2\varphi-\widehat{e}_i\partial_{e_i}\varphi],
$$
which for cylindrical functions as above can be written as
$$
L\varphi=\sum_{j,k\le n} (\widetilde{l}_j,\widetilde{l}_k)_H \partial_{x_j}
\partial_{x_k}\varphi(l_1,\ldots,l_n)
-\sum_{j=1}^n l_j \partial_{x_j}\varphi(l_1,\ldots,l_n).
$$
If we use $l_j=\widehat{e_j}$, then we arrive at the simple expression used above
in the case of~$\mathbb{R}^\infty$.

It is readily seen that if $v_i^n$ is the conditional expectation
of $v_i$ with respect to the measure $\mu$ and the $\sigma$-field $\mathcal{B}_n$
generated by $x_1,\ldots,x_n$, then the projection $\mu_n$ of $\mu$ to $\mathbb{R}^n$
satisfies the finite-dimensional equation
$$
L_{b^n}^*\mu_n=0
$$
with $b^n(x)=-x+v^n(x)$ on $\mathbb{R}^n$,
$v^n=(v_1^n,\ldots,v_n^n)$. Actually, our infinite-dimensional equation is equivalent
to this system of finite-dimensional equations for projections.

\begin{theorem}\label{t2}
Let $\mu$ be  a Borel probability measure on~$X$ such that $|v|_H\in L^1(\mu)$
and $L_b^*\mu=0$. Then $\mu$ is absolutely continuous with respect to $\gamma$
and for $f:=d\mu/d\gamma$ we have
\begin{equation}\label{b2}
\int_X f \bigl(\log(f+1)\bigr)^{\alpha}\, d\gamma \le
C(\alpha)\Bigl[
1 + \bigl\| |v|_H \bigr\|_{L^1(\mu)}
\Bigl(\log\bigl(1 + \bigl\| |v|_H \bigr\|_{L^1(\mu)}\bigr)\Bigr)^{\alpha}\Bigr],
\end{equation}
where $\alpha<1/4$ and $C(\alpha)$ are the same numbers as in Theorem~{\rm\ref{mt1}}.
\end{theorem}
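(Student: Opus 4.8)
The plan is to reduce Theorem~\ref{t2} to the finite-dimensional Theorem~\ref{mt1} by projecting $\mu$ onto the coordinate subspaces, and then to pass to the limit along these projections by a martingale argument. By Tsirelson's theorem recalled above (the case of a finite-dimensional Cameron--Martin space is already contained in Theorem~\ref{mt1}), we may assume without loss of generality that $X=\mathbb{R}^\infty$, that $\gamma$ is the countable power of the standard Gaussian measure on the real line, and that $H=l^2$; then the projection $\gamma_n$ of $\gamma$ onto $\mathbb{R}^n$ is exactly the standard Gaussian measure on~$\mathbb{R}^n$. Fix for the moment some $\alpha\in(0,1/4)$.

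First I would use the fact, already explained above, that the projection $\mu_n$ of $\mu$ onto $\mathbb{R}^n$ satisfies the finite-dimensional stationary equation $L_{b^n}^*\mu_n=0$ with $b^n(x)=-x+v^n(x)$, $v^n=(v_1^n,\dots,v_n^n)$, $v_i^n=\mathbb{E}_\mu[v_i\mid\mathcal{B}_n]$. The key point is a dimension-free bound on the perturbation. Applying the conditional Jensen inequality to the convex function $h\mapsto|h|_H$ on $H=l^2$ (or, more elementarily, duality together with $\sum_{i\le n}a_iv_i^n=\mathbb{E}_\mu[\sum_{i\le n}a_iv_i\mid\mathcal{B}_n]$ for $\sum a_i^2\le1$), one obtains pointwise $|v^n|\le\mathbb{E}_\mu[\,|v|_H\mid\mathcal{B}_n]$, and hence
$$
\|v^n\|_{L^1(\mu_n)}=\int_{\mathbb{R}^\infty}|v^n|\,d\mu\le\int_{\mathbb{R}^\infty}|v|_H\,d\mu=\bigl\|\,|v|_H\bigr\|_{L^1(\mu)}=:V .
$$
Hence Theorem~\ref{mt1} applies on each $\mathbb{R}^n$, and since the right-hand side of (\ref{b1}) is nondecreasing in the $L^1$-norm of the drift, it gives $\mu_n=f_n\cdot\gamma_n$ with
$$
\int_{\mathbb{R}^n}f_n\bigl(\log(f_n+1)\bigr)^\alpha\,d\gamma_n\le C(\alpha)\bigl[1+V\bigl(\log(1+V)\bigr)^\alpha\bigr]
$$
for all $n$, with one and the same constant $C(\alpha)$ as in Theorem~\ref{mt1}.

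Now regard each $f_n$ as a function on $\mathbb{R}^\infty$ depending only on the first $n$ coordinates. Then $\{f_n\}$ is a nonnegative martingale with respect to $\gamma$ and the filtration $\{\mathcal{B}_n\}$, since for $A\in\mathcal{B}_n$ one has $\int_Af_{n+1}\,d\gamma=\mu_{n+1}(A\times\mathbb{R})=\mu_n(A)=\int_Af_n\,d\gamma$. Hence $f_n\to f$ $\gamma$-a.e. for some $f\ge0$. Because the function $\Phi(t)=t\bigl(\log(t+1)\bigr)^\alpha$ satisfies $\Phi(t)/t\to+\infty$ as $t\to+\infty$ (this is precisely where $\alpha>0$ enters), the uniform bound above and the de la Vall\'ee Poussin criterion yield uniform integrability of $\{f_n\}$ with respect to $\gamma$, whence $f_n\to f$ in $L^1(\gamma)$. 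Consequently, for every $m$ and every $A\in\mathcal{B}_m$ we have $\mu(A)=\mu_m(A)=\int_Af_m\,d\gamma=\int_Af\,d\gamma$, and since $\bigcup_m\mathcal{B}_m$ generates the Borel $\sigma$-algebra of~$\mathbb{R}^\infty$, the measure $\mu$ is absolutely continuous with respect to $\gamma$ with density~$f$. Finally, $f_n\bigl(\log(f_n+1)\bigr)^\alpha\to f\bigl(\log(f+1)\bigr)^\alpha$ $\gamma$-a.e., so Fatou's lemma turns the displayed uniform bound into (\ref{b2}). Once absolute continuity is known, the same reasoning with $f_n=\mathbb{E}_\gamma[f\mid\mathcal{B}_n]$ and Theorem~\ref{mt1} gives (\ref{b2}) for every $\alpha<1/4$.

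I expect the only genuinely delicate point to be the uniform-in-dimension control of the drift, i.e. the inequality $\|v^n\|_{L^1(\mu_n)}\le V$: it is exactly what allows the dimension-free estimate of Theorem~\ref{mt1} to survive in the limit, and once it is in hand the remaining martingale and Tsirelson arguments are entirely standard. It is worth stressing that only the superlinear growth of $\Phi$---not its convexity---is needed, which is why a positive power $\alpha$ of the logarithm, rather than no logarithmic factor at all, is what makes the passage to absolute continuity work.
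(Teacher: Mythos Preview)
Your proof is correct and follows essentially the same approach as the paper: project to finite dimensions, bound $\|v^n\|_{L^1(\mu_n)}\le\bigl\||v|_H\bigr\|_{L^1(\mu)}$ by the conditional expectation property, apply Theorem~\ref{mt1} to obtain a uniform (dimension-free) bound on $\int f_n(\log(f_n+1))^\alpha\,d\gamma$, deduce uniform integrability of the martingale $\{f_n\}$, pass to the limit to identify $\mu=f\cdot\gamma$, and conclude with Fatou's lemma. Your write-up is in fact more detailed than the paper's (you spell out the conditional Jensen step, the de~la~Vall\'ee~Poussin criterion, and the generation of the Borel $\sigma$-algebra by cylinders), but the argument is the same; your final sentence about re-running the argument once absolute continuity is known is harmless but unnecessary, since $\alpha\in(0,1/4)$ was arbitrary.
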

\begin{proof}
It is readily seen that the finite-dimensional densities $f_n=d\mu_n/d\gamma_n$
regarded as functions in $L^1(\gamma)$ form a martingale with respect to $\gamma$
and the $\sigma$-fields~$\mathcal{B}_n$.
By the property of conditional expectations we have
$$
\| v^n\|_{L^1(\mu_n)}\le \bigl\| |v|_H\bigr\|_{L^1(\mu)}
$$
By the main theorem this martingale is
uniformly integrable, hence converges in the weak topology of $L^1(\gamma)$ and almost everywhere
to some
function $f\in L^1(\gamma)$. Then, for every
$\varphi\in \mathcal{F}\mathcal{C}_0$, the integral of $\varphi f$ with respect to~$\gamma$
equals the integral of $\varphi$ with respect to~$\mu$. Hence $\mu=f\cdot\gamma$.
Estimate~(\ref{b2}) follows by Fatou's theorem.
\end{proof}

Unlike the case of $\mathbb{R}^d$, the absolute continuity of $\mu$ with respect to $\gamma$
is also a substantial novelty of this theorem.

In the infinite-dimensional case it is important to distinguish between
the Cameron--Martin space norm of $v$ and a weaker norm that arises if we consider
$\mu$ and $\gamma$ on some continuously embedded Hilbert space $E$ of full measure.
The integrability of $\|v\|_E$ does not guarantee the absolute continuity of~$\gamma$
even if $v$ still takes values in the Cameron--Martin space~$H$. This is why
in the infinite-dimensional case we avoid writing
$\|v\|_{L^1(\mu)}$ in place of $\bigl\| |v|_H \bigr\|_{L^1(\mu)}$, as we did in~$\mathbb{R}^d$.

\begin{remark}
\rm
The following analog of Lemma~\ref{lem2} holds.
Let $u\colon X\to H$ be a Borel vector field such that
$|u|_H\in L^1(\gamma)$. Then, for each $s > 0$, there is a
function
$$
T_{s}{\rm div}_{\gamma} u\in L^1(\gamma)
$$
 satisfying the identity
$$
\int_X \varphi T_{s}{\rm div}_{\gamma} u\, d\gamma
=-\int_X ( D_H T_s \varphi, u)_H \, d\gamma
= -\int_X e^{-s}( D_H \varphi, T_s  u)_H \, d\gamma, \quad
\varphi\in \mathcal{F}\mathcal{C}_0
$$
and the bound
\begin{equation}\label{u3}
\|T_{s}{\rm div}_{\gamma} u\|_{L^1(\gamma)} \le
 \frac{e^{-s}}{\sqrt{1-e^{-2s}}}\bigl\| |u|_H\bigr\|_{L^1(\gamma)}.
 \end{equation}
 This means that $T_{s}{\rm div}_{\gamma}$ extends to a bounded operator from
 $L^1(\gamma,H)$ to~$L^1(\gamma)$.
 In addition, in the situation of Theorem~\ref{t2} for $w=fv$ equality  
 (\ref{Tf}) holds. 
 
For the proof we take mappings $u^j\colon X\to \mathbb{R}^d$ with components
of class $\mathcal{F}\mathcal{C}_0$ such that $|u-u^j|_H\to 0$ in $L^1(\gamma)$.
It follows from Lemma~\ref{lem2} that the functions $T_{s}{\rm div}_{\gamma} u^j$
converge in~$L^1(\gamma)$. The limit will be denoted by $T_{s}{\rm div}_{\gamma} u$.
Obviously, it satisfies the desired identity and inequality~(\ref{u3}).
Equality (\ref{Tf}) for $w=fv$ follows from the finite-dimensional case applied 
to $w^n=f_nv^n$, because by the martingale convergence theorem 
we have convergence of $|v-v^n|_H$ to zero in $L^1(\gamma)$, which along with 
(\ref{u3}) enables us to pass to the limit in (\ref{Tf}) for $w^n$. 

It follows from the aforementioned identity that if $|u|_H\in L^p(\gamma)$ with some $p>1$, then
$$
T_{s}{\rm div}_{\gamma} u\in L^p(\gamma),
$$
 since for $\varphi\in L^{p/(p-1)}(\gamma)$
one has $|D_H T_s \varphi|_H\in L^{p/(p-1)}(\gamma)$ and
$\bigl\| |D_H T_s \varphi|_H\bigr\|_{L^{p/(p-1)}(\gamma)}$
is estimated through~$\|\varphi\|_{L^{p/(p-1)}(\gamma)}$. Moreover, the order of integrability
of $T_{s}{\rm div}_{\gamma} u$ can be increased by writing $T_s=T_{s-\delta}T_\delta$ and using
that by the hypercontractivity (see, e.g.,~\cite{B98})
 $T_{s-\delta}$ takes $L^p(\gamma)$ to $L^{q}(\gamma)$ with
$q=e^{2s-2\delta}(p-1)+1$.
Note that this does not help much for estimating $f$, because in our main situation
$u=fv$, so that even if $v$ is bounded, some a priori information is needed
about the integrability of~$f$ (and in the general case $f$ can fail
to be integrable to a power larger than~$1$, so increasing integrability is only possible
in a logarithmic scale).
 \end{remark}

In addition, Proposition~\ref{p1} also extends to infinite dimensions with the following modification:
in the definition of the Kantorovich norm, one should take the supremum over the intersection
of $\mathcal{F}\mathcal{C}_0$ with the class ${\rm Lip}_1(H)$ of
Borel functions that are $1$-Lipschitz along the Cameron--Martin space
or over the whole class  ${\rm Lip}_1(H)$. By definition the class ${\rm Lip}_1(H)$
consists of all Borel functions $\varphi$ for which
$$
|\varphi(x+h)-\varphi(x)|\le C|h|_H, \quad x\in X,\, h\in H.
$$

The corresponding
definition is this. For Borel probability measures $\mu$ and $\nu$ integrating
all Borel functions that are Lipschitz along
the Cameron--Martin space we set
$$
\|\mu-\nu\|_{K,H}=\sup\biggl\{
\int_X \varphi\, d\mu - \int_X \varphi\, d\nu\colon \varphi\in {\rm Lip}_1(H)\biggr\},
$$
where ${\rm Lip}_1(H)$

\begin{proposition}
Under the assumptions of Theorem~{\rm\ref{t2}}  we have
$$
\|f\cdot\gamma  - \gamma\|_{K,H} \le  \bigl\|f|v|_H\bigr\|_{L^1(\gamma)}=
\bigl\| |v|_H\bigr\|_{L^1(\mu)}.
$$
In addition,
$$
\|T_tf-f\|_{L^1(\gamma)}\le (2t)^{1/2} \bigl\| |v|_H\bigr\|_{L^1(\mu)}.
$$
\end{proposition}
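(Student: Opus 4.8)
The plan is to repeat the argument of Proposition~\ref{p1} almost verbatim, with Lemma~\ref{lem2} and identity~(\ref{Tf}) replaced by their infinite-dimensional counterparts, which are already available. By the remark following Theorem~\ref{t2}, the field $w=fv$ lies in $L^1(\gamma,H)$ since $\int_X f|v|_H\,d\gamma=\bigl\||v|_H\bigr\|_{L^1(\mu)}<\infty$, and for every $s>0$ there is a function $T_s{\rm div}_\gamma w\in L^1(\gamma)$ with
$$
\int_X \varphi\,T_s{\rm div}_\gamma w\,d\gamma=-\int_X (D_HT_s\varphi,w)_H\,d\gamma=-\int_X e^{-s}(D_H\varphi,T_sw)_H\,d\gamma,\qquad \varphi\in\mathcal{F}\mathcal{C}_0,
$$
the bound~(\ref{u3}), and moreover equality~(\ref{Tf}), namely $T_tf-f=\int_0^t T_s{\rm div}_\gamma w\,ds$ in $L^1(\gamma)$.

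First I would prove the second inequality. Combining~(\ref{Tf}) with the triangle inequality for the Bochner integral, the bound~(\ref{u3}), and the elementary estimate $e^{-s}(1-e^{-2s})^{-1/2}\le(2s)^{-1/2}$, one obtains
$$
\|T_tf-f\|_{L^1(\gamma)}\le\int_0^t\|T_s{\rm div}_\gamma w\|_{L^1(\gamma)}\,ds\le\int_0^t(2s)^{-1/2}\,ds\,\bigl\||w|_H\bigr\|_{L^1(\gamma)}=(2t)^{1/2}\bigl\||v|_H\bigr\|_{L^1(\mu)}.
$$
For the Kantorovich bound I would fix $\varphi\in\mathcal{F}\mathcal{C}_0$ with $|D_H\varphi|_H\le1$ and integrate $\int_X\varphi\,T_s{\rm div}_\gamma w\,d\gamma$, rewritten via the last expression in the identity above, over $s\in[0,t]$; by Fubini (legitimate since $\varphi$ is bounded and $s\mapsto T_s{\rm div}_\gamma w$ is Bochner integrable on $[0,t]$ by~(\ref{u3})) together with~(\ref{Tf}) this gives
$$
\Bigl|\int_X\varphi\,(T_tf-f)\,d\gamma\Bigr|=\Bigl|\int_0^t e^{-s}\int_X(D_H\varphi,T_sw)_H\,d\gamma\,ds\Bigr|\le\int_0^t e^{-s}\,ds\,\bigl\||w|_H\bigr\|_{L^1(\gamma)}\le\bigl\||w|_H\bigr\|_{L^1(\gamma)},
$$
where I used $|D_H\varphi|_H\le1$ and $\int_X|T_sw|_H\,d\gamma\le\int_X T_s(|w|_H)\,d\gamma=\int_X|w|_H\,d\gamma$. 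Letting $t\to\infty$ and using that $T_tf\to1$ in $L^1(\gamma)$ (ergodicity of the Ornstein--Uhlenbeck semigroup, immediate from Mehler's formula by density of bounded cylindrical functions and contractivity of $T_t$ on $L^1(\gamma)$), the left-hand side tends to $\bigl|\int_X\varphi\,d\gamma-\int_X\varphi\,d\mu\bigr|$. Taking the supremum over all such $\varphi$, and recalling from the remark preceding this proposition that $\|f\cdot\gamma-\gamma\|_{K,H}$ equals the supremum over $\mathcal{F}\mathcal{C}_0\cap{\rm Lip}_1(H)$, one arrives at $\|f\cdot\gamma-\gamma\|_{K,H}\le\bigl\||w|_H\bigr\|_{L^1(\gamma)}=\bigl\|f|v|_H\bigr\|_{L^1(\gamma)}=\bigl\||v|_H\bigr\|_{L^1(\mu)}$.

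The assembly above is routine once the infinite-dimensional form of Lemma~\ref{lem2} is in hand, so the real content---and the only place where a genuine difficulty could hide---is in the two inputs on which it rests: the validity of~(\ref{Tf}) for $w=fv$ in infinite dimensions, which the remark after Theorem~\ref{t2} obtains by approximating $v$ in $L^1(\gamma,H)$ by cylindrical fields and invoking the martingale convergence $|v-v^n|_H\to0$ in $L^1(\gamma)$ together with~(\ref{u3}); and the identification of $\|\cdot\|_{K,H}$ with the supremum over smooth cylindrical functions that are $1$-Lipschitz along~$H$. Both have already been recorded in the excerpt, so no further obstacle remains.
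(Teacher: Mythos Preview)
Your argument for the second inequality is correct; you carry out the computation directly in infinite dimensions via the remark's version of~(\ref{Tf}) and~(\ref{u3}), whereas the paper simply reduces to the finite-dimensional Proposition~\ref{p1} through the projections $f_n$ and passes to the limit. Both routes are fine.

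For the first inequality, however, you have skipped exactly the step that carries the real content of the paper's proof. You establish the bound
\[
\Bigl|\int_X\varphi f\,d\gamma-\int_X\varphi\,d\gamma\Bigr|\le\bigl\||v|_H\bigr\|_{L^1(\mu)}
\]
for $\varphi\in\mathcal{F}\mathcal{C}_0\cap{\rm Lip}_1(H)$, and then assert that ``$\|f\cdot\gamma-\gamma\|_{K,H}$ equals the supremum over $\mathcal{F}\mathcal{C}_0\cap{\rm Lip}_1(H)$,'' citing the remark preceding the proposition. But that remark is only an \emph{announcement} that the proposition holds with either class of test functions; it does not prove the two suprema coincide. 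The formal definition of $\|\cdot\|_{K,H}$ is over the full class ${\rm Lip}_1(H)$, and the passage from the cylindrical class to the full class is precisely what the paper's proof supplies: one needs that every $\varphi\in{\rm Lip}_1(H)$ is $\gamma$-integrable (a nontrivial fact, quoted from \cite[Theorem~4.5.7]{B98}), hence $\mu$-integrable by Fatou since $\mu=f\cdot\gamma$, and then a second application of Fatou (approximating $\varphi$ by its conditional expectations on $\mathcal{B}_n$, which stay in ${\rm Lip}_1(H)$) extends the inequality. The paper explicitly stresses afterward that the $\mu$-integrability of ${\rm Lip}_1(H)$ functions ``is not obvious in advance''---this is the substance you are missing. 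Without it, your conclusion applies only to the weaker norm defined via cylindrical test functions, not to $\|\cdot\|_{K,H}$ as stated.
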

\begin{proof}
The second bound follows immediately from the finite-dimensional case
as in the proof of Theorem~\ref{t2}. To obtain the first bound we need to pass from
the  $\mathcal{F}\mathcal{C}_0\cap {\rm Lip}_1(H)$  to ${\rm Lip}_1(H)$
in the inequality
$$
\biggl|\int_X \varphi f\, d\gamma - \int_X \varphi\, d\gamma\biggr|
\le \bigl\| |v|_H\bigr\|_{L^1(\mu)}.
$$
We first observe that every function in ${\rm Lip}_1(H)$ is $\gamma$-integrable
(see \cite[Theorem~4.5.7]{B98}). Hence by Fatou's theorem we conclude that it is also $\mu$-integrable.
Now applying Fatou's theorem once again we conclude that the previous inequality extends to~${\rm Lip}_1(H)$.
\end{proof}

We emphasize that in this proposition we have shown that
functions from ${\rm Lip}_1(H)$ are $\mu$-integrable, which is not obvious in advance.
This property enables us to extend the class~$\mathcal{F}\mathcal{C}_0$, with respect
to which the equation is defined, to the larger class $\mathcal{F}\mathcal{C}_b$, in which
representing functions $\varphi_0$ are taken in the class $C_b^\infty(\mathbb{R}^n)$. The advantage
of $\mathcal{F}\mathcal{C}_b$ is that it is a linear space. However,
the problem with this class in our
original definition is due to the fact that no information about the integrability
of
$$
x_1 \partial_{x_1}\varphi(x_1,\ldots,x_n),
\ldots,
x_n \partial_{x_n}\varphi(x_1,\ldots,x_n)
$$
with respect to $\mu$
is given in advance. For $\varphi\in C_0^\infty(\mathbb{R}^n)$, such functions
are bounded, hence $\mu$-integrable. In the case of an abstract locally convex space
our result shows that $X^{*}\subset L^1(\mu)$, hence $L_b\varphi\in L^1(\mu)$ for cylindrical
functions and the equation $L_b^*\mu=0$
holds also with respect to the class $\mathcal{F}\mathcal{C}_b$
in place of the original class~$\mathcal{F}\mathcal{C}_0$.

Suppose now that $\{w_n(t)\}$ is a sequence of independent Wiener processes,
$v=(v_n)_{n=1}^\infty$ is a sequence of Borel functions on $\mathbb{R}^\infty$ such that
$\sum_{n=1}^\infty |v_n(x)|^2<\infty$, and there is a diffusion process $\xi(t)=(\xi_n(t))_{n=1}^\infty$
in $\mathbb{R}^\infty$ satisfying the perturbed Ornstein--Uhlenbeck stochastic equation
$$
d\xi_n(t)=dw_n(t)-\xi_n(t)dt +v_n(\xi(t))dt.
$$
It follows from our result that if $\xi(t)$ has a stationary measure $\mu$
 for which $|v|_{l^2}\in L^1(\mu)$, then $\mu$ is absolutely
continuous with respect to the Gaussian measure that is the stationary solution for the non-perturbed
linear equation. The assumption that $|v|_{l^2}\in L^1(\mu)$ is essential
and cannot be
replaced by the weaker condition that the components of $v$ are $\mu$-integrable (recall
that the stationary equation is meaningful with this weaker condition).
Similarly,
if we have the stochastic equation
$$
d\xi_n(t)=dw_n(t)-\beta_n \xi_n(t)dt +v_n(\xi(t))dt
$$
with some $\beta_n\ge \beta_0>0$ and $\mu$ is a stationary measure, then the $\mu$-integrability
of $|v|_{l^2}$ ensures the absolute continuity
of $\mu$ with respect to the Gaussian measure corresponding to the linear system
with $v=0$. On this direction, see, e.g.,~\cite{DaP04}.

\vskip .1in

We thank M.~R\"ockner for useful discussions. We are also grateful to the anonymous referee
for the thorough reading and corrections.

This research was supported by the Russian Science Foundation Grant 17-11-01058
at Lo\-mo\-no\-sov
Moscow State University.

\end{document}